\renewcommand{\mathcal}{\EuScript}
\theoremstyle{plain}
\def\swappedhead#1#2#3{%
	\thmnumber{\@upn{\the\thm@headfont#2\@ifnotempty{#1}{.~}}}%
	\thmname{#1}%
	\thmnote{ {\the\thm@notefont(#3)}}}
\newcommand{\myol}[2][3]{{}\mkern#1mu\overline{\mkern-#1mu#2}}
\newtheorem{thm}{Theorem}[subsection]
\newtheorem{lem}[thm]{Lemma}
\newtheorem{prop}[thm]{Proposition}
\newtheorem{cor}[thm]{Corollary}
\theoremstyle{definition}
\newtheorem{ex}[thm]{Example}
\newtheorem{rem}[thm]{Remark}
\newtheorem{remark}[thm]{Remark}
\newtheorem{defn}[thm]{Definition}
\newtheorem{para}[thm]{}
\DeclareMathOperator{\Spec}{Spec}
\title{Factorization statistics and bug-eyed configuration spaces}
\author{Dan Petersen}
\email{dan.petersen@math.su.se}
\thanks{D.P. acknowledges support by ERC-2017-STG 759082 and a Wallenberg Academy Fellowship. }
\thanks{P.T. acknowledges support by NSF-Grant No. DMS-1903040.}
\author{Philip Tosteson}
\email{ptoste@math.uchicago.edu}
\renewcommand{\SS}{\mathrm S}
\newcommand{\Sep}{\mathrm{Sep}}
\newcommand{\Supp}{\mathrm{Supp}}
\newcommand{\F}{\mathcal F}
\renewcommand{\L}{\mathcal L}
\newcommand{\Sal}{\mathrm{Sal}}
\newcommand{\Strat}{\mathrm{Strat}}
\newcommand{\Pol}{\mathrm{Pol}}	
\newcommand{\Aut}{\mathrm{Aut}}
	\newcommand{\Frob}{\mathrm{Frob}}
\newcommand{\A}{\mathbb A}
\newcommand{\Z}{\mathbf Z}
\newcommand{\Q}{\mathbf Q}
\newcommand{\R}{\mathbf R}
\newcommand{\C}{\mathbf C}
\newcommand{\bbC}{\mathbf C}
\newcommand{\rS}{\mathrm S}
\newcommand{\V}{\mathbb V}
\newcommand{\Ch}{{\mathrm{Ch}}}
\newcommand{\Fq}{\mathbf F_q}
\newcommand{\Conf}{\mathrm{Conf}}
\newcommand{\PConf}{\mathrm{PConf}}
\begin{document}

\begin{abstract}
A recent theorem of Hyde proves that the factorizations statistics of a random polynomial over a finite field are governed by the action of the symmetric group on the configuration space of $n$ distinct ordered points in $\R^3$. Hyde asked whether this result could be explained geometrically. We give a  geometric proof of Hyde's theorem as an instance of the Grothendieck--Lefschetz trace formula applied to an interesting, highly nonseparated algebraic space. An advantage of our method is that it generalizes uniformly to any Weyl group. In the process we study certain non-Hausdorff models for complements of hyperplane arrangements, first introduced by Proudfoot.
\end{abstract}
	\maketitle

\section{Introduction}	

\subsection{What, why, and how: factorization statistics and configuration spaces}	
	
\begin{para} Let $\chi \colon \SS_n \to \C$ be a class function.  We consider $\chi$ as a function on the set of integer partitions of $n$. Let $\Pol_n(\Fq)$ denote the set of monic polynomials of degree $n$ over the finite field $\Fq$. If $f \in \Pol_n(\Fq)$ has the factorization $f = \prod_i f_i$ into irreducible factors over $\Fq$, then the degrees of the $f_i$ form a partition of $n$, and we let $\chi(f)$ denote the value of $\chi$ on the resulting partition. If $f$ is squarefree, we may also define $\chi(f)$ as follows: the Frobenius automorphism $\Frob_q$ acts by permuting the roots of $f$, the resulting permutation is well defined up to conjugation, and we define $\chi(f)$ as the value of $\chi$ on this permutation. \end{para}

\begin{para} We will be interested in sums of the form $\sum_{f \in \Pol_n(\Fq)} \chi(f)$. These sums often have number-theoretic significance: we may for example compute the average number of irreducible factors, the variance, the probability of having more linear factors than quadratic factors, and so on, of a random degree $n$ polynomial over $\Fq$.  In general,  we may compute the expected value of any \emph{factorization statistic}, by which we mean any quantity $\chi(f)$  that only depends on the degrees of the irreducible factors of $f$.  The subject of this paper is a recent theorem of Hyde \cite{hyde} that relates these statistics to point configurations in $\R^3$. \end{para}

\begin{para}To state the theorem we need the following notation. If $V$ is a representation of the finite group $G$, then we denote by $\operatorname{ch} V$ its character. We denote by $\langle -,-\rangle_G$ the standard inner product of class functions, for which the characters of irreducible complex representations form an orthonormal basis.
\end{para}

\begin{thm}[Hyde] \label{hyde-thm}Let $\PConf_n(\R^3)$ be the configuration space of $n$ distinct ordered points in $\R^3$. The sum of $\chi$ over $\Pol_n(\Fq)$ is given by
\[
\sum_{f \in \Pol_n(\Fq)} \chi(f) = q^n \sum_{i\geq 0} q^{-i} \Big\langle \chi, \operatorname{ch} H^{2i}(\PConf_n(\R^3),\Q) \Big\rangle_{\SS_n}. 
\]
\end{thm}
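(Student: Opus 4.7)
The plan is to realize Hyde's identity via the (equivariant) Grothendieck--Lefschetz trace formula applied to a smooth nonseparated algebraic space $\X_n$ of relative dimension $n$ over $\Spec \Z$. The space $\X_n$ is the bug-eyed configuration space of the title: Proudfoot's non-Hausdorff model associated to the braid hyperplane arrangement in $\A^n$. It carries a natural $\SS_n$-action by permutation of coordinates, restricts to the usual ordered configuration space $\PConf_n(\A^1)$ away from the braid arrangement, and is nonseparated along the arrangement precisely so that the $\SS_n$-action remains free everywhere. Composing with the symmetric-function quotient $\A^n \to \A^n/\SS_n \cong \Pol_n$ yields an $\SS_n$-invariant morphism $\pi \colon \X_n \to \Pol_n$ that I will verify is an $\SS_n$-torsor in algebraic spaces.

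Two ingredients are then needed. First, for each $f \in \Pol_n(\Fq)$ I will check that the Frobenius conjugacy class of the fiber $\pi^{-1}(f)$ in $\SS_n$ has cycle type equal to the partition of $n$ recorded by the degrees of the irreducible factors of $f$. For squarefree $f$ this is the classical statement that Frobenius permutes the roots; the content of the bug-eyed construction is that this extends to \emph{all} $f$. Granting this, the equivariant Grothendieck--Lefschetz trace formula for $\X_n$ produces, for any $\SS_n$-representation $V_\chi$ with character $\chi$,
\[
\sum_{f \in \Pol_n(\Fq)} \chi(f) \;=\; \sum_i (-1)^i \operatorname{tr}\bigl(\Frob \,\big|\, (H^i_c(\X_{n,\bar\Fq},\Q_\ell) \otimes V_\chi)^{\SS_n}\bigr).
\]
Second, extending Proudfoot's theorem from topology to $\ell$-adic étale cohomology, I will show that $H^*(\X_{n,\bar\Fq},\Q_\ell)$ is concentrated in even degrees with
\[
H^{2i}(\X_{n,\bar\Fq},\Q_\ell) \cong H^{2i}(\PConf_n(\R^3),\Q) \otimes \Q_\ell
\]
as $\SS_n$-representations, and that Frobenius acts purely by $q^i$ on the $2i$-th degree. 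Both groups are essentially the Orlik--Solomon algebra of the braid arrangement with grading doubled, an identification known for $\PConf_n(\R^3)$ by Cohen.

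To conclude, Poincaré duality for the smooth $n$-dimensional algebraic space $\X_n$ gives an $\SS_n \times \Frob$-equivariant isomorphism $H^{2i}_c(\X_{n,\bar\Fq},\Q_\ell) \cong H^{2(n-i)}(\X_{n,\bar\Fq},\Q_\ell)^\vee(-n)$, so Frobenius acts on $H^{2i}_c$ by $q^i$ and the $V_\chi$-isotypic multiplicity of $H^{2i}_c$ equals $\langle \chi, \operatorname{ch} H^{2(n-i)}(\PConf_n(\R^3),\Q)\rangle_{\SS_n}$. Substituting into the trace formula above and reindexing $j = n - i$ recovers Hyde's identity. The main obstacle is the cohomological computation in the second ingredient: one must translate Proudfoot's topological result into étale cohomology for the nonseparated space $\X_n$ and prove purity of the Frobenius weights, plausibly via an étale Mayer--Vietoris spectral sequence for the cover of $\X_n$ by the open complements $U_{ij} = \A^n \setminus H_{ij}$. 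Subsidiary technical issues are extending Grothendieck--Lefschetz and Poincaré duality to the nonseparated setting, verifying that $\pi$ is a torsor in algebraic spaces, and identifying the Frobenius conjugacy class on its fibers over the discriminant locus.
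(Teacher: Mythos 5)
Your overall strategy — apply the Grothendieck--Lefschetz trace formula to the bug-eyed (Proudfoot) model $\X_n$ with its free $\SS_n$-action, and identify the \'etale cohomology with $H^*(\PConf_n(\R^3))$ — is precisely the paper's strategy. However, there are two substantive problems in the details.

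First, the map $\pi\colon\X_n\to\Pol_n$ is \emph{not} an $\SS_n$-torsor, and this is not a technicality. The map $\X_n\to\A^n$ is \'etale (indeed a local isomorphism), but $\A^n\to\A^n/\SS_n\cong\Pol_n$ is ramified along the discriminant, so the composite is finite flat of degree $n!$ but not \'etale. What is true, and what makes the argument run, is that the $\SS_n$-action on $\X_n$ is free, so the quotient stack $[\X_n/\SS_n]$ is an algebraic space, and $\X_n\to[\X_n/\SS_n]$ is tautologically an $\SS_n$-torsor. The induced map $[\X_n/\SS_n]\to\A^n$ is a bijection on field-valued points but is definitely not an isomorphism of algebraic spaces (it is a bug-eyed cover; $[\X_n/\SS_n]$ is not locally separated). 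So you must run Grothendieck--Lefschetz on the algebraic space $[\X_n/\SS_n]$ and then use the bijection $[\X_n/\SS_n](\Fq)\cong\Pol_n(\Fq)$, rather than pretending the base is $\Pol_n$ itself.

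Second, and more seriously, your proposed method for computing $H^*_{\mathrm{\acute et}}(\X_{n,\bar\Fq})$ and its Frobenius weights has a gap. The open sets $U_{ij}=\A^n\setminus H_{ij}$ do not cover $\X_n$ (points on $H_{ij}$, with their chamber decoration, lie in $\X_n$); the natural cover is by the $n!$ affine charts $X_C\cong\A^n$ indexed by Weyl chambers $C$, with intersections given by complements of sub-arrangements. Even granting that, a \v Cech/Mayer--Vietoris spectral sequence for this cover does not straightforwardly yield purity: the $E_1$-page is pure of weight $2q$ in bidegree $(p,q)$, so higher differentials vanish for weight reasons and the sequence degenerates at $E_2$; but that only shows $H^m$ is an extension of the $E_2^{p,q}$ with $p+q=m$, which \emph{a priori} mixes weights $2q$ for various $q\le m$. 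Purity requires $E_2^{p,q}=0$ for $p>0$, which is a nontrivial combinatorial statement about the nerve of the cover — it is, essentially, the content of Delucchi's theorem on ``valid orders'' of the chambers. The paper sidesteps this by using a valid order (Delucchi, Lofano--Paolini) to construct an honest algebraic cell decomposition of $\X_n$ over $\Z$, from which purity and the characteristic-independence of the Betti numbers (and of their decomposition into $\SS_n$-isotypic pieces) follow immediately by the long exact sequence in Borel--Moore homology. You will want this or an equivalent combinatorial input to complete your second ingredient; a purely formal MV argument will not produce it.

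Your use of compactly supported cohomology followed by Poincar\'e duality is a cosmetic difference from the paper, which states Grothendieck--Lefschetz directly in terms of arithmetic Frobenius acting on ordinary cohomology of a smooth base; these are equivalent formulations. One last remark: the paper deliberately applies Artin's comparison theorem to $\X_n$ rather than to $[\X_n/\SS_n]$, since the latter is not locally separated and so has no analytification; your equivariant formulation implicitly does the same, which is the right move.
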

\begin{rem}The odd degree cohomology groups of $\PConf_n(\R^3)$ vanish, which explains why Hyde's theorem only involves the even cohomology. 
\end{rem}

\begin{para}
It is natural to ask
\begin{quote}
	\emph{What is the meaning of this formula?}
\end{quote} If you have some experience with \'etale cohomology, your first thought is perhaps also: 
\begin{quote}\emph{This \textbf{must} be an instance of the Grothendieck--Lefschetz trace formula}.\end{quote} But how? Why should the two spaces $\Pol_n$ and $\PConf_n(\R^3)$ have any kind of relationship with each other? And isn't $\PConf_n(\R^3)$ very far from being a complex algebraic variety? Hyde's method of proof does not answer these questions: he calculates explicitly a generating function for all factorization statistics, considered as an element of the ring of symmetric functions, and then compares with the character of $H^\ast(\PConf_n(\R^d),\Q)$, which is well understood.  
\end{para}

\begin{para}In this paper we give a fully geometric proof of Hyde's result, and explain why there should be a connection between factorization statistics and point configurations in $\R^3$. In a nutshell, we will consider a certain non-Hausdorff scheme $X_n$ over $\Z$ with an action of $\SS_n$, satisfying the following two properties: (i) The analytification $X_n(\bbC)$ has the $\SS_n$-equivariant weak homotopy type of $\PConf_n(\R^3)$. (ii) The stack quotient $[X_n/\SS_n]$  --- which happens to be an algebraic space --- can be understood as the badly non-Hausdorff space parameterizing degree $n$ monic polynomials $p(t)$, together with a choice of factorization of $p(t)$ into irreducibles.  In particular we have a bijection of sets $[X_n/\SS_n](\Fq) = \Pol_n(\Fq)$.  
 Applying the Grothendieck--Lefschetz trace formula with twisted coefficients to $[X_n/\SS_n]$ produces Hyde's formula.   \end{para}

\begin{para}A reader may object to the claim in the preceding paragraph that the space parametrizing a degree $n$ polynomial with a choice of decomposition into irreducible factors is not Hausdorff. Surely every polynomial has a \emph{unique} factorization into irreducibles? Surely the space of polynomials with a fixed such factorization is just the space of polynomials, i.e.\ affine $n$-space? But this is precisely the crux of the matter --- the space of polynomials with a factorization into irreducibles is \emph{not} affine $n$-space but rather something deceptively similar: it is a \emph{bug-eyed cover} of affine $n$-space. A bug-eyed cover is a type of geometric peculiarity that occurs in the world of algebraic spaces: it is a space which fails to be Hausdorff not because it has ``too many points'' but rather because it has ``too many tangent directions''. \end{para}

\begin{para}The nonclassical nature of the space $[X_n/\SS_n]$, and the relationship to configurations of points in $\R^3$, can be understood geometrically in terms of the theory of foliations. (However, we will not use this perspective in the body of the paper.) Consider a foliated manifold $(M,\mathscr F)$. There is a natural induced foliation on $\PConf_n(M)$, which descends to the quotient manifold $\Conf_n(M) \coloneqq \PConf_n(M)/\SS_n$. Now let us specialize to the product foliation of $\R^3$ whose leaves are the lines parallell to the $z$-axis. Then the leaf space of the induced foliation of $\Conf_n(\R^3)$ can be identified with the analytification of $[X_n/\SS_n]$. In general the leaf space of a foliation is most naturally thought of as a Lie groupoid, the \emph{holonomy groupoid}. In this case the holonomy groupoid has trivial isotropy groups, but is still not weakly equivalent to a manifold, not even a non-Hausdorff manifold. On the algebraic side this is reflected in the fact that the algebraic space $[X_n/\SS_n]$ is not locally separated, which implies that it does not admit a well defined analytification in the category of complex analytic spaces. But if we consider $[X_n/\SS_n]$ as a stack then we obtain a well defined analytification as an \'etale groupoid in analytic spaces, weakly equivalent to the leaf space of the above foliation of $\Conf_n(\R^3)$. 
\end{para}

\begin{para}The scheme $X_n$ is a special case of a more general construction: for any arrangement $A$ of hyperplanes in $\R^n$ there exists a scheme $X_A$ with the property that the induced codimension three subspace arrangement in $(\R^3)^n$ is a fiber bundle over $X_A(\C)$ with fiber $\R^n$ (and in particular they are weakly homotopy equivalent), and similarly the complement of the complexification of the arrangement in $\C^n$ is an $\R^n$-fiber bundle over the set of real points $X_A(\R)$. The scheme $X_A$ was introduced by Proudfoot \cite{proudfoot} as a non-Hausdorff model for the complement of a complexified real hyperplane arrangement, although Proudfoot only considered the real points of $X_A$, not its set of complex points. \label{proudfoot}\end{para}

\begin{para}From our geometric approach,  we will also see that Hyde's theorem is a special case of a formula valid for any Weyl group.  This more general formula specializes to Hyde's theorem when the Weyl group $W$ is $\SS_n$ with its action on $\A^n$ by permuting the coordinates. 
\end{para}

\begin{thm}\label{generalized hyde}Let $W \subset \mathrm{GL}(n, \Z)$ be a Weyl group acting by reflections on affine $n$-space. Let $\chi \colon W \to \C$ be a class function. Let $M_W$ be the complement of the associated codimension 3 subspace arrangement in $(\R^3)^n$. For any finite field $\Fq$ we have
\[ \sum_{x \in (\A^n/W)(\Fq)} \chi(x) = q^n \sum_{i\geq 0} q^{-i} \Big \langle \chi,\operatorname{ch} H^{2i}(M_W,\Q)\Big\rangle_{W}. \]
\end{thm}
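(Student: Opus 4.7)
My plan is to apply the Grothendieck--Lefschetz trace formula to the algebraic space $\mathcal Y := [X_W/W]$, where $X_W := X_A$ is the Proudfoot scheme (see \cref{proudfoot}) associated to the reflection arrangement $A$ of $W$ in $\A^n_\Z$. The essential structural results advertised in the introduction for the polynomial case generalize verbatim to any Weyl group: (i) $W$ acts freely on $X_W$, so $\mathcal Y$ is an algebraic space; (ii) $M_W$ is an $\R^n$-fiber bundle over $X_W(\C)$, giving a $W$-equivariant isomorphism $H^\ast(X_W(\C),\Q)\cong H^\ast(M_W,\Q)$ of graded $W$-modules, concentrated in even degrees; and (iii) the natural map $\mathcal Y \to \A^n/W$ is a bijection on $\Fq$-points. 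The bijection (iii) carries extra data: given $x \in (\A^n/W)(\Fq)$, lift to $\tilde x \in \mathcal Y(\Fq)$ and then to $y \in X_W(\bar\Fq)$; free-ness of the $W$-action produces a unique $\sigma_y \in W$ with $\Frob_q(y) = \sigma_y \cdot y$, whose conjugacy class depends only on $x$. This conjugacy class is precisely the one at which the theorem evaluates $\chi$ --- in Hyde's polynomial case, it is the cycle type of the Frobenius action on the roots, i.e.\ the partition of $n$ given by the degrees of the irreducible factors.

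Next I would compute the étale cohomology of $X_W$. Smooth base change and Artin's comparison theorem give $H^i_{\mathrm{\acute et}}(X_W \otimes \bar\Fq, \Q_\ell) \cong H^i(M_W,\Q) \otimes \Q_\ell$ as $W$-representations, provided $p = \mathrm{char}(\Fq)$ avoids a finite bad set (which should include the primes dividing $|W|$). The crucial refinement, and the main technical heart of the proof, is that $\Frob_q$ acts on $H^{2i}_{\mathrm{\acute et}}$ by the scalar $q^i$. I would obtain this by stratifying $X_W$ by affine strata derived from its explicit gluing presentation as a union of copies of $\A^n_\Z$ indexed by chambers of $A$ and flats of the arrangement, so that the associated Mayer--Vietoris or stratification spectral sequence has terms of pure Tate type concentrated in even degrees, forcing purity on the abutment.

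Finally, applying the Grothendieck--Lefschetz trace formula to $\mathcal Y$ with coefficients in the local system $\L_\chi$ attached to the virtual representation $V_\chi$, and using free-ness of the $W$-action to identify $H^i_c(\mathcal Y, \L_\chi) \cong (H^i_c(X_W \otimes \bar\Fq, \Q_\ell) \otimes V_\chi^\ast)^W$, yields
\[
\sum_{x \in (\A^n/W)(\Fq)} \chi(x) \;=\; \sum_i (-1)^i \operatorname{Tr}\bigl(\Frob_q \bigm| (H^i_c(X_W \otimes \bar\Fq, \Q_\ell) \otimes V_\chi^\ast)^W\bigr).
\]
Poincar\'e duality identifies $H^{2(n-i)}_c(X_W)$ with $H^{2i}(X_W)(n)^\vee$, so together with Tate purity $\Frob_q$ acts on $H^{2(n-i)}_c$ by $q^{n-i}$. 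Since all characters of Weyl groups are real-valued, $V_\chi \cong V_\chi^\ast$, and the right side collapses to $\sum_i q^{n-i} \langle \chi, \operatorname{ch} H^{2i}(M_W,\Q)\rangle_W = q^n \sum_i q^{-i} \langle \chi, \operatorname{ch} H^{2i}(M_W,\Q)\rangle_W$, matching the theorem. The principal obstacle is establishing the scheme-theoretic properties of the non-separated object $X_W$ --- the free $W$-action, the bijection on $\Fq$-points with matching Frobenius conjugacy class, and the pure Tate structure --- since these are not formal consequences of standard algebro-geometric machinery but genuinely depend on a careful analysis of the bug-eyed construction.
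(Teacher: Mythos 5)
Your overall strategy is the same as the paper's: apply the Grothendieck--Lefschetz trace formula to $[X_W/W]$, use the $\R^n$-bundle $M_W\to X_W(\C)$, and establish that the \'etale cohomology of $X_W$ is pure Tate with the right Betti numbers. The proposal is therefore on the right track, but it has two genuine gaps, both at places where the paper does real work.

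First, you assert but do not prove that the Frobenius conjugacy class produced by the free $W$-action on $X_W$ coincides with the conjugacy class at which the theorem statement evaluates $\chi$. Recall that $\chi(x)$ is defined via the \emph{minimal representative} of the coset $w\cdot\mathrm{Stab}(y)$, with respect to a choice of base chamber $C_0$ (\S\ref{minimalparabolic}). Your free-action construction instead lifts all the way to a point $(y,C)\in X_W(\overline{\mathbf F}_q)$ and picks the unique $\sigma\in W$ with $\sigma\cdot y=\mathrm{Frob}(y)$ and $\sigma\cdot C=C$. That these two constructions pick out the same conjugacy class is not automatic; it is the content of the paper's fourth bullet point in its proof. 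The verification uses the fact that if one chooses $C_0\subset C$, then preserving $C$ is exactly the characterizing property of the minimal representative of the coset with respect to the hyperplanes through $y$. You should spell this out; it is the precise point where the bug-eyed construction encodes the combinatorics of the theorem.

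Second, your treatment of purity and characteristic independence is both incomplete and partly off-target. You concede a finite bad set of characteristics, but the theorem holds for all $\Fq$ with no exceptions. More importantly, the decomposition you invoke --- ``stratifying $X_W$ by affine strata derived from its explicit gluing presentation'' --- does not exist in the form you describe: the strata of the natural stratification $\mathrm{Strat}(A)$ of $X_W$ are complements of hyperplane arrangements inside linear subspaces, not affine spaces, and in particular they have cohomology in both even and odd degrees. Getting from the open cover $X_W = \bigcup_C X_C$ (whose multi-fold intersections are again arrangement complements) to the conclusion that $X_W$ has cohomology concentrated in even degrees, pure of Tate type, with Betti numbers independent of the characteristic, requires choosing a \emph{valid order} on $\mathrm{Ch}(A)$ in the sense of \cref{valid order}; only then do the successive differences $U^{(i)}\setminus U^{(i-1)}$ become honest affine spaces, giving an algebraic cell decomposition over $\Z$. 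The existence of such an order is the nontrivial combinatorial theorem of Delucchi and Lofano--Paolini (\cref{delucchi}), and it is the missing ingredient in your argument. A Mayer--Vietoris spectral sequence as you sketch has $E_1$-terms of the same weight in a given $q$-row, so the $d_1$ differentials are \emph{not} killed by weight considerations, and one does not immediately see vanishing of odd cohomology or degeneration; the cell decomposition route avoids these complications entirely and is what makes the argument work uniformly over $\mathrm{Spec}\,\Z$.
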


\begin{para}Let us explain the notation $\chi(x)$ in the left hand side of \cref{generalized hyde}. If $x \in (\A^n/W)(\Fq)$,  choose an arbitrary element $y \in \A^n(\overline{\mathbf F}_q)$ in the fiber of $\A^n \to \A^n/W$ over $x$. Since Frobenius acts preserving the fiber over $x$, there exists an element $w \in W$ such that $w \cdot y = \mathrm{Frob}(y)$. But $w$ is not unique in general; the best we can say is that we get a well defined coset $w \cdot \mathrm{Stab}(y) \subseteq W$, where $\mathrm{Stab}(y)$ is the stabilizer of $y$. But a stabilizer of a point is precisely what is called a \emph{parabolic} subgroup of a reflection group, and it is a standard fact that any coset of a parabolic subgroup has a distinguished \emph{minimal representative}  (which depends on the choice of a system of positive roots). Up to conjugacy the minimal representative does not depend on the choice of $y$ or the choice of system of positive roots, and we define $\chi(x)$ to be the value of $\chi$ on this conjugacy class. \end{para}

\begin{para}\label{type B}For a general Weyl group we do not have an interpretation of \cref{generalized hyde} as concrete as the one in the Type A case in terms of factorizations of polynomials. However, in the Type B case there is one: we obtain a formula that relates the expected factorizations of monic \emph{even} polynomials of degree $2n$ over finite fields to the action of the hyperoctahedral group on the cohomology of the complement of the Type B subspace arrangement in $(\R^3)^n$. 
\end{para}

\subsection{Structure of the paper}

\begin{para}This paper consists of two parts of rather different flavor. 

In \cref{section hyde} we describe previous work, and how we were led to define the scheme $X_n$; we explain how Hyde's theorem follows by applying the Grothendieck--Lefschetz trace formula to $[X_n/\SS_n]$; and finally we explain how to generalize the scheme $X_n$ to a more general scheme $X_W$ for a Weyl group $W$, such that Grothendieck--Lefschetz applied to $[X_W/W]$ produces \cref{generalized hyde}. This part of the paper is rather leisurely written, with few or no proofs. In particular we will in the course of the arguments use some properties of the schemes $X_n$ and $X_W$ without careful justifications, in order to not disrupt the flow of the text. 

The careful justifications are supplied in \cref{section three}, where we study the schemes $X_n$ and $X_W$ in detail. As mentioned previously these schemes are special cases of a more general construction due to Proudfoot, which associates a scheme $X_A$ to a hyperplane arrangement $A$ in $\R^n$. We show that the complement of the induced codimension $3$ subspace arrangement in $(\R^3)^n$ is an $\R^n$-bundle over $X_A(\C)$, and that the complement of the complexification is similarly fibered over $X_A(\R)$; in the real case this is a theorem of Proudfoot. The combinatorial structure of the scheme $X_A$ is independently interesting, and closely tied to the Salvetti complex. Using a theorem of Delucchi and Lofano--Paolini, we construct an algebraic cell decomposition (also known as an affine paving) of $X_A$, which can be used to determine its cohomology as a Galois representation.
\end{para}

\begin{para}We are grateful to Dustin Clausen, Emanuele Delucchi, Trevor Hyde, Dmitri Panov and David Rydh for comments and suggestions. Will Sawin has informed us that he has obtained a related geometric approach to Hyde's theorem. We also thank Michael Falk for informing us of Proudfoot's work. 
\end{para}

\section{The path to Hyde's theorem} \label{section hyde}

\subsection{The case of squarefree polynomials}

\begin{para}It will be convenient to rewrite the right hand side of Hyde's theorem \ref{hyde-thm} as follows. Let $V_\chi$ be a rational representation of $\SS_n$ and $\chi = \operatorname{ch} V_\chi$. If we can compute $\sum_{f \in \Pol_n(\Fq)} \chi(f)$ for any $V_\chi$, then we can compute this sum for an arbitrary class function by linearity, so it suffices to prove this case of Hyde's theorem. If $Y$ is any topological space then the finite-sheeted covering $\PConf_n(Y) \to \Conf_n(Y) = \PConf_n(Y)/\SS_n$ is classified by a homomorphism $\pi_1(\Conf_n(Y)) \to \SS_n$. Via this homomorphism, we may think of $V_\chi$ as defining a local system $\V_\chi$ on $\Conf_n(Y)$ for any $Y$, and there is an isomorphism
$$ H^k(\Conf_n(Y),\V_\chi) \cong H^k(\PConf_n(Y),\Q) \otimes_{\SS_n} V_\chi $$
for all $k$. 
In these terms we may state Hyde's formula equivalently in the form \begin{eqnarray}
\sum_{f \in \Pol_n(\Fq)} \chi(f) = q^n \sum_{i\geq 0} q^{-i} \dim H^{2i}(\Conf_n(\R^3),\V_\chi). \label{unweighted}
\end{eqnarray}
In this form the connection to Grothendieck--Lefschetz will become more transparent. \end{para}

\begin{para}
Before trying to find a geometric interpretation of Hyde's formula \eqref{unweighted} it is instructive to first consider a similar but simpler case, namely the case where we restrict $f$ to be \emph{squarefree}. In this setting, the analogue of Hyde's theorem is
\begin{eqnarray}
\sum_{\substack{f \in \Pol_n(\Fq) \\ f \text{ squarefree}}} \chi(f) = q^n \sum_{i\geq 0} (-1)^i q^{-i} \dim H^{i}(\Conf_n(\C),\V_\chi), \label{squarefree}
\end{eqnarray}which \emph{is} an instance of the Grothendieck--Lefschetz trace formula in a more direct manner. The formula \eqref{squarefree} can be found in a paper of Church--Ellenberg--Farb \cite{churchellenbergfarb-asymptotics} (as they point out, closely related statements were in the literature previously, e.g.\ \cite{kisinlehrer}). In fact, Hyde was led to discover his formula by investigating what happens when the ``squarefree'' condition is dropped from the left hand side of \eqref{squarefree}. We give a brief account of the proof of \eqref{squarefree} here, and refer to \cite{churchellenbergfarb-asymptotics} for a more detailed account. We will use the Grothendieck--Lefschetz formula in the form
\begin{equation}
\sum_{x \in X(\Fq)}\operatorname{Tr}(\Phi_q \mid \mathcal F_{\overline x}) = q^{\dim X}\sum_i (-1)^i \operatorname{Tr}(\Phi_q \mid H^i_{\text{\emph{\'et}}}(X_{\overline{\mathbf F}_q},\mathcal F)),\label{GL1}
\end{equation} 
where $X$ is a smooth finite type scheme over $\Fq$, $\mathcal F$ is a locally constant $\Q_\ell$-sheaf on $X$ 
(where $\ell$ is a prime invertible in $\Fq$), $\mathcal F_{\overline x}$ is the stalk of $\mathcal F$ at a geometric point $\overline x$ over $x$, and $\Phi_q$ is the arithmetic Frobenius (whose eigenvalues are the inverses of those of the geometric Frobenius). 
\end{para}
\begin{para}\label{localsystem}
We apply Grothendieck--Lefschetz to the scheme $\Conf_n(\A^1)$ over $\Z$, which parametrizes squarefree monic degree $n$ polynomials. In the same way that the representation $V_\chi$ defined a local system $\V_\chi$ on $\Conf_n(Y)$ for any topological space $Y$ we obtain a locally constant $\ell$-adic sheaf $\V_{\chi,\ell}$ on $\Conf_n(\A^1)$. Now recall that when the polynomial $f$ is squarefree we may also define $\chi(f)$ as follows: the Frobenius automorphism acts by permuting the roots of $f$, the resulting permutation is well defined up to conjugation, and we define $\chi(f)$ as the value of $\chi$ on this conjugacy class. But the monodromy of the covering $\PConf_n(\A^1) \to \Conf_n(\A^1)$ is precisely given by the action of $\SS_n$ permuting the roots, which implies that $\chi(f) = \operatorname{Tr}(\Phi_q \mid (\V_{\chi,\ell})_{\overline f})$, and so the left hand sides of \eqref{squarefree} and \eqref{GL1} agree. 
\end{para}

\begin{para}
To identify the right hand sides of  \eqref{squarefree} and \eqref{GL1} we need to prove that $$H^i(\Conf_n(\C),\V_\chi) \otimes \Q_\ell \cong H^i_{\text{\emph{\'et}}}(\Conf_n(\A^1)_{\overline{\mathbf F}_q},\V_{\chi,\ell})$$ for all $i$, or equivalently that there is an $\SS_n$-equivariant isomorphism $$H^i(\PConf_n(\C),\Q_\ell) \cong H^i_{\text{\emph{\'et}}}(\PConf_n(\A^1)_{\overline{\mathbf F}_q},\Q_\ell);$$ moreover, we need to know that all eigenvalues of $\Phi_q$ acting on $H^i_{\text{\emph{\'et}}}(\PConf_n(\A^1)_{\overline{\mathbf F}_q},\Q_\ell)$ equal $q^{-i}$. The first part can be deduced in all characteristics from the Artin comparison theorem and the fact that $\PConf_n(\A^1)$ is the complement of a simple normal crossing divisor in a smooth proper scheme over $\Z$ (using the Fulton--MacPherson compactification \cite{fmcompactification}). However to understand the Galois action as well,  it is more efficient to argue that both Betti and \'etale cohomology of the complement of an arrangement of hyperplanes can be computed by an identical deletion-restriction scheme; the latter argument implies in addition that the \'etale cohomology of the complement of any hyperplane arrangement is pure Tate of weight $2i$ in degree $i$, as needed. This argument is independently due to Kim and Lehrer \cite{kimhyperplane,lehrerhyperplane}. 
\end{para}

\begin{para}As we mentioned in \S\ref{type B}, our more general formula \cref{generalized hyde} has an interpretation in the Type B case in terms of expected factorizations of even polynomials of degree $2n$ over $\Fq$. There is also a ``squarefree''  Type B version of the formula,  relates the expected factorizations of squarefree even polynomials of degree $2n$ over $\Fq$ and the action of the hyperoctahedral group on the cohomology of the complement of the Type B arrangement in $\C^n$. This formula is is due to Jim\'enez Rolland and Wilson \cite{jimenezrollandwilson}, and is proven using the Grothendieck--Lefschetz trace formula in much the same way as in the Type A case. 
\end{para}

\begin{para}
The upshot is that in the squarefree case we have a good geometric understanding of the factorization statistics in terms of $\ell$-adic cohomology, and we would like a similar geometric description of the  factorization statistics for general polynomials. 
\end{para}
\subsection{The plot thickens: weighted statistics}
\begin{para}\label{two approaches}
We saw that to compute the factorization statistics of squarefree polynomials  \eqref{squarefree},  one uses the cover $\PConf_n(\A^1) \to \Conf_n(\A^1)$.   So at first it may seem that if we wish to compute the factorization statistics for \emph{all} polynomials,  we need to replace the cover of the space of squarefree polynomials, $\PConf_n (\A^1) \to \Conf_n(\A^1)$, by the ramified cover of the space of all polynomials, $h\colon (\A^1)^n \to \A^n$.   Even though $h$ is not \'etale, we can still associate a sheaf on $\A^n$  to every irreducible representation $V_\chi$ of $\SS_n$, namely the sheaf
$$ (h_* \Q_\ell)_\chi \coloneqq \mathrm{Hom}_{\SS_n}(V_\chi,h_\ast \Q_\ell).$$
Here $V_\chi$ is considered as a constant sheaf of $\Q_\ell[\SS_n]$-modules on $\A^n$ (and should not be confused with the local system $\V_\chi$), so that $(h_* \Q_\ell)_\chi \otimes_\Q V_\chi$ is the $V_\chi$-isotypic component of the sheaf $h_* \Q_\ell$ with respect to its canonical $\SS_n$-action.  Over the locus of squarefree polynomials $(h_* \Q_\ell)_\chi$ restricts to the local system $\V_{\chi,\ell}$ of  \S\ref{localsystem}.  \label{isotypicsheaf}
\end{para}

\begin{para}
 This approach to computing factorization statistics was carried out by Gadish  \cite{gadish-trace},  resulting in the following identity
 \begin{equation}
\sum_{f \in \Pol_n(\Fq)} \frac{1}{\vert \Aut(f)\vert} \sum_{\sigma \in \Aut(f)} \chi(F_f \circ \sigma)   =   q^n \cdot  
\langle \chi, \mathbf 1 \rangle_{\SS_n}.  \label{weighted}\end{equation}
Let us explain the notation $\sum_{\sigma \in \Aut(f)} \chi(F_f \circ \sigma)$.   Suppose $f \in \Pol_n(\Fq)$ factors as $f = \prod f_i^{a_i}$, where all the $f_i$ are distinct and irreducible, and let us number the roots of $f$ as $z_1,\ldots,z_n$. Then we define $\Aut(f) := \prod_i \SS_{a_i}$, and the ordering of the roots identifies $\Aut(f)$ with a Young subgroup of $\SS_n$.  We let $F_f \in \SS_n$ denote the permutation representing the action of Frobenius on the roots, i.e.\ such that $\Frob_q(z_i) = z_{F_f(i)}$. The permutation $F_f$ is not uniquely determined unless $f$ is squarefree. Nevertheless the sum
$$\sum_{\sigma \in \Aut(f)} \chi(F_f \circ \sigma)$$
is well defined and independent of our choices. Note that if $f$ is squarefree then this sum is just $\chi(f)$.

\begin{para}\label{coh of affine space}
Note also that the right hand side of \eqref{weighted} can trivially be rewritten as 
$$ \langle \chi, \mathbf 1 \rangle_{\SS_n} =  \sum_{i\geq 0} (-1)^i \langle \chi,  \operatorname{ch} H^{i}_{\text{\emph{\'et}}}((\A^1)^n, \Q_\ell) \rangle_{\SS_n}$$
since the cohomology of $(\A^1)^n$ is just a single copy of the trivial  representation of $\SS_n$ in degree $0$. Thus, as expected, on the right hand side of \eqref{weighted} we see the cohomology of $(\A^1)^n$, together with its symmetric group action. 
\end{para}

\begin{para}
Now, clearly   the application of Grothendieck--Lefschetz to the map $(\A^1)^n \to \A^n$ did not result in the formula for the factorization statistics that we wanted.  Instead it gave us a different formula, for \emph{weighted} factorization statistics.  Here the quantity associated to a polynomial $f$ involves an averaging procedure, weighted by $|\Aut(f)|^{-1}$,  where $\Aut(f)$ is the stabilizer group of the action of $\SS_n$ on the fiber above $f$.   So the key problem in recovering Hyde's formula from the action of $\SS_n$ on $(\A^1)^n$ seems to be that $\SS_n$ does not act freely.
\end{para}

\begin{para}Indeed, the presence of the weight $ |\Aut(f)|^{-1}$ in Gadish's formula suggests that it may alternately be obtained by using the \emph{stack quotient} $[(\A^1)^n/\SS_n]$. 
 The Grothendieck--Lefschetz trace formula for a smooth algebraic stack $\mathcal X$ takes the form \cite[Corollary 6.4.10]{behrend-derived} 
\begin{equation}
\sum_{x \in \mathcal X(\Fq)/\simeq} \frac 1 {\vert \Aut(x)\vert} \operatorname{Tr}(\Phi_q \mid \mathcal F_{\overline x}) = q^{\dim \mathcal X}\sum_i (-1)^i \operatorname{Tr}(\Phi_q \mid H^i_{\text{\emph{\'et}}}(\mathcal X_{\overline{\mathbf F}_q},\mathcal F))\label{GL}
\end{equation}where $\Aut(x)$ denotes the $\Fq$-points of the isotropy group of $\mathcal X$ at $x$.   This formula, applied to the cohomology of $[(\A^1)^n /\SS_n]$,  with coefficients in the local system $\V_\chi$ induced by the  \'etale $\SS_n$-cover $(\A^1)^n \to [(\A^1)^n /\SS_n]$,  reproduces Gadish's formula \eqref{weighted}.  In fact, Gadish proved a general formula applicable to a ramified $G$-cover $h \colon X \to X/G$, where $G$ is any finite group, and it can be realized as the Grothendieck--Lefschetz trace formula applied to $[X/G]$.  The reason why the two approaches produce the same formula is that the pushforward of $\V_\chi$ along $[X/G] \to X/G$ is precisely the sheaf $(h_* \Q_l)_{\chi}$  that Gadish uses to obtain his formula (\S\ref{isotypicsheaf}).
\end{para}

\begin{para}
 All of the above suggests that the non-free action of $\SS_n$ on $(\A^1)^n$ is the culprit that causes weighted factorization statistics to appear, rather than ordinary factorization statistics,  when applying Grothendieck--Lefschetz to $(\A^1)^n \to \A^n$.   In other words, the isotropy of the stack $[(\A^1)^n/\SS_n]$  is the source of the weights $|\Aut(f)|^{-1}$, and to recover Hyde's formula geometrically we should somehow try to remove the stackiness of $[(\A^1)^n/\SS_n]$.  We could try to do this by replacing $(\A^1)^n$  by a space $X_n$ that looks somehow ``similar'' to affine space but which has a free action of $\rS_n$.    In particular, there should be a bijection of sets $$[X_n/\SS_n](\Fq) \stackrel \sim \to \Pol_n(\Fq).$$   This is in contrast to $[(\A^1)^n/\SS_n](\Fq)$, which is a groupoid with  $\pi_0 [(\A^1)^n/\SS_n](\Fq) = \Pol_n(\Fq)$ but with non-trivial stabilizers. Moreover, we saw that the right hand side of Gadish's formula corresponds to the cohomology of $(\A^1)^n$ (\S\ref{coh of affine space}); if we want the Grothendieck--Lefschetz trace formula to replicate the right hand side of Hyde's formula \eqref{unweighted} then the space $X_n$ should have the same cohomology groups as $\PConf_n(\R^3)$, and the cohomology groups $H^{2i}_{\mathrm{\emph{\'et}}}$ should be pure Tate of weight $2i$. 
\end{para}

\subsection{Bug-eyed configuration spaces}\label{subsec:bug-eyed}

\begin{para}The simplest way to modify $(\A^1)^n$ into a space with a free $\rS_n$-action is to add extra data at the points stabilized by $\rS_n$.   Accordingly, we propose the following construction. Let us define a nonseparated scheme $X_n$ as the space parametrizing $n$ not necessarily distinct ordered points on the line $\A^1$, together with an auxilliary total ordering on each of the sets of points which happen to coincide. For example, $X_2$ is the plane with doubled diagonal. In general, $X_n$ is obtained by gluing together $n!$ copies of affine $n$-space $\A^n$ along open subspaces given by complements of diagonal loci. (A more general construction will be carefully described in \S\ref{construction}.) There is a natural action of $\SS_n$ on $X_n$, which is now a \emph{free} action: the loci in $\A^n$ which have been ``duplicated'' are precisely those with nontrivial $\SS_n$-stabilizers. Thus the quotient stack $[X_n/\SS_n]$ has trivial isotropy groups and is now actually an algebraic space. The algebraic space $[X_n/\SS_n]$ is smooth and looks deceptively similar to affine $n$-space $\A^n$: there is a natural map $p \colon [X_n/\SS_n] \to \A^n$, such that for the natural stratification of $[X_n/\SS_n]$ with strata indexed by the partition lattice, the restriction of $p$ to each stratum is an isomorphism onto its image. Nevertheless $p$ is not an isomorphism --- for one, $[X_n/\SS_n]$ is not separated (in fact not even locally separated), since $X_n$ is not separated. This is a geometric phenomenon which is impossible for schemes but possible for algebraic spaces: informally speaking, schemes can only be nonseparated because they have ``too many points'', whereas algebraic spaces can be nonseparated also because they have ``too many tangent directions'' (cf.\ \cite[Example 1, p.\ 9]{knutson-algebraicspaces}). For example, $[X_2/\SS_2]$ is the plane with doubled tangent directions along the diagonal. In general $[X_n/\SS_n]$ is a ``bug-eyed cover'' of $\A^n$, in the terminology of Koll\'ar \cite{kollar-bugeyed}.  
\end{para}
\begin{para}\label{firstclaim}

Our first claim is that if we associate a locally constant $\ell$-adic sheaf $\V_{\chi,\ell}$ on $[X_n/\SS_n]$ as before, then the left-hand side of the Grothendieck--Lefschetz trace formula applied to $\mathcal X = [X_n/\SS_n]$ and $\mathcal F = \V_{\chi,\ell}$ becomes precisely the left hand side of \eqref{unweighted}. The argument is almost exactly the same as in the squarefree case: we observe that since $[X_n/\SS_n] \to \A^n$ is a bug-eyed cover, it follows that $[X_n/\SS_n](\Fq) \to \Pol_n(\Fq)$ is a bijection. Moreover, the monodromy of $X_n \to [X_n/\SS_n]$ over a polynomial $f$ is given by $\SS_n$ acting on the set of roots of $f$. This identifies Frobenius acting on the set of roots with a permutation whose cycle type is given by the degrees of the irreducible factors of $f$. This explains why factorization statistics are connected to the cohomology of $X_n$.  But how is $X_n$ related to $\PConf_n(\R^3)$?
\end{para}
\begin{para}
Our second claim (\cref{weakequivalence}) is that there exists an $\SS_n$-equivariant map $$f \colon \PConf_n(\R^3) \to X_n(\C)$$ which is a fiber bundle with fiber $\R^n$ and in particular a weak homotopy equivalence. Let us define this map $f$. A point in $\R^3$ has an $x$, $y$, and a $z$-coordinate; by forgetting the $z$-coordinate we obtain a point in $\R^2 \cong \C$. In this way we obtain from a configuration of $n$ distinct ordered points in $\R^3$ a configuration of $n$ points in $\C$, where the points may now coincide with each other. But note that if some subset of these points in $\C$ are equal, then their preimages in $\R^3$ must have distinct $z$-coordinates. It follows that the $z$-coordinate defines a total ordering on this set of coinciding points. In this way we obtain from each point of $\PConf_n(\R^3)$ a point of $X_n(\C)$. We remark that this construction is quite similar to the stratification of $\Conf_n(\C)$ considered by Fox--Neuwirth \cite{foxneuwirth} and Fuks \cite[Section 3]{fuks}. 
\end{para}
\begin{para}
The two claims combined together give a ``philosophical'' explanation of Hyde's theorem: the sum $\sum_{f \in \Pol_n(\Fq)} \chi(f)$ can be evaluated in terms of the $\SS_n$-equivariant \'etale cohomology of $X_n$, and over the complex numbers the space $X_n$ has the same cohomology as $\PConf_n(\R^3)$. If we want to actually re-prove Hyde's theorem by purely geometric considerations we need two further ingredients:\begin{itemize}
	\item By the Artin comparison theorem, and general constructibility and base change results, the Betti cohomology of $X_n(\C)$ is isomorphic to the \'etale cohomology of $X_{n,\overline{ \mathbf F}_q}$ away from finitely many characteristics. To recover Hyde's theorem in all characteristics we need to know that the sheaves $R^q\pi_\ast \Q_\ell$ are locally constant over $\operatorname{Spec}(\Z[1/\ell])$, where $\pi \colon X_n \to \operatorname{Spec}(\Z)$ is the structural morphism. 
	\item To apply Grothendieck--Lefschetz, we need to know the Galois action on $H^i(X_{n,\overline{ \mathbf F}_q},\Q_\ell)$; specifically, what is needed for Hyde's theorem is the statement that $H^i$ is pure Tate of weight $i$. 
\end{itemize}
Just as in the squarefree case we will resolve these issues by finding a direct geometric method to compute the cohomology of $X_n$ that works independently of the characteristic and which also keeps track of the Galois action. 
\end{para}

\subsection{A cellular decomposition} \label{subsec:cellular}
\begin{para}There are many examples of algebraic varieties $Z$ with the property that $H^i(Z)$ is pure Tate of weight $i$ (and then nonzero only for even $i$), such as  smooth Schubert varieties and smooth projective toric varieties.  The most hands-on way in which this can happen is when $Z$ is smooth and admits an \emph{algebraic cell decomposition}, a certain kind of decomposition into locally closed subschemes each of which is isomorphic to an affine space $\A^d$ (\cref{cell def}). If this holds then $H^{2i}$ is pure Tate of weight $2i$ for all $i$, and if $Z$ is $d$-dimensional then $\dim H^{2i}(Z)$ is given by the number of cells of dimension $d-i$ (\cref{cellprop}).
 \end{para}
\begin{para}

We now claim that the schemes $X_n$ admit such an algebraic cell decomposition. For example, $X_2$ is the plane with doubled diagonal, which can be decomposed as the union of a plane and a line:
$$ X_2 = \A^2 \cup \A^1,$$ where $\A^2$ is the locus $\{ x_1 \neq x_2\} \cup \{x_1 = x_2, 1 \leq 2\}$  and $\A^1$ is the line $\{x_1 = x_2, 2 \leq 1\}$.   
Similarly $X_3$ looks like $\A^3$ except the planes defined by $x=y$, $x=z$ and $y=z$ have all been doubled, and there are six copies of the line $x=y=z$; this gives rise to a decomposition
$$ X_3 = \A^3 \cup \A^2 \cup \A^2 \cup \A^2 \cup \A^1 \cup \A^1.$$
What will happen in general is indeed that each $X_n$ admits an algebraic cell decomposition (which is far from unique or canonical, as can be seen even for $X_2$). Although this statement might seem simple, there is in fact a nontrivial combinatorial theorem lurking in the background here, which is a theorem of Delucchi \cite{delucchi} and Lofano--Paolini \cite{lofanopaolini} stating that the set of chambers of an arrangement of real hyperplanes always admits a \emph{``valid order''}. Each choice of valid order gives rise to an algebraic cell decomposition of $X_n$, so it follows that the Betti numbers of $X_n$ in \'etale cohomology are independent of the characteristic of the base field, and the $i$th \'etale cohomology group of $X_n$ is pure Tate of weight $i$ in any characteristic. Moreover, \emph{even though the cell decomposition is not $\SS_n$-equivariant}, it follows that the decomposition of the \'etale cohomology into irreducible $\SS_n$-representations is the same in any characteristic. Indeed, the $\ell$-adic sheaf on $\mathrm{Spec}(\Z[1/\ell])$ given by the the $q$th \'etale cohomology of $X_n$ with $\Q_\ell$-coefficients is the direct sum of its isotypical components under the action of $\SS_n$, by Maschke's theorem. But this sheaf is locally constant, and a direct summand of a locally constant sheaf is locally constant, so each isotypical component must itself be locally constant over $\mathrm{Spec}(\Z[1/\ell])$.   Hence the \'etale cohomology of $[X_n/\SS_n]$ with coefficients in $V_\chi$, which is identified with this isotypic component,  is locally constant as well.
\end{para}
\subsection{Conclusion}

\begin{para}We are now in a position to assemble what has been said so far into a proof. More precisely, 
putting these three ingredients together gives a completely geometric proof of Hyde's theorem:
\begin{itemize}
\item The identification between the Grothendieck--Lefschetz trace formula for $[X_n/\SS_n]$ and the left hand side of Hyde's formula (\S\ref{firstclaim}).
\item The weak equivalence between $\PConf_n(\R^3)$ and $X_n(\C)$ (\cref{weakequivalence}).
\item The space $X_n$ having an algebraic cell decomposition over the integers, so that its cohomology is pure of Tate type, with Betti numbers independent of characteristic of the base field (\cref{exists cell decomposition} and \cref{cellprop}).
\end{itemize}
\end{para}

\begin{rem}
	An amusing remark is that while $|\PConf_n (\A^1)(\Fq)| = q \cdot (q - 1) \cdots (q - (n - 1))$  has its point counts given by a falling Pochhammer symbol $(q)_{\underline n}$,  the point counts of  $X_n(\Fq)$ are given by a \emph{rising} Pochhammer symbol $(q)^{\underline n} = q (q+1)\cdots (q + n - 1)$.  We can see this directly by considering the map $X_n(\Fq) \to X_{n-1}(\Fq)$ which forgets the last point, and has fiber of size $q + (n - 1)$.  This may be compared with the map of configuration spaces $\PConf_n (\A^1) \to \PConf_{n-1} (\A^1)$, which can be used to derive in a similar fashion the number of $\Fq$-points of $\PConf_n (\A^1)$.      
 Furthermore, just as we may consider the configuration space $\PConf_n (Y)$ for any space $Y$,  we may  construct a variant of $X_n$ where  $\A^1$ is replaced by an arbitrary $Y$.  If we take $Y$ to be the finite set $[k] = \{1, \dots, k\}$,  then we obtain a set with $(k)^{\underline n}$  elements, on which $\SS_n$ acts freely, with quotient exactly the set of multisubsets of $[k]$.  In some sense, this whole paper is an elaboration on this bijective proof that $[k]$ has $\frac{(k)^{\underline n}}{n!}$ multisubsets.
\end{rem}

\begin{rem}\label{locally separated}
As a final remark on the proof let us point out that in our argument we applied Artin's comparison theorem between \'etale and singular cohomology to $X_n$, and not to the quotient $[X_n/\SS_n]$. One reason is psychological --- the scheme $X_n$ is easier to visualize than the bug-eyed algebraic space $[X_n/\SS_n]$ --- but another reason is that it makes the argument significantly technically simpler. The reason is that a finite type algebraic space over $\C$ admits an analytification if and only if it is locally separated \cite[Theorem 2.2.5]{conradtemkin}, so the space $[X_n/\SS_n]$ doesn't even \emph{have} an analytification to which Artin's theorem could have been applied. The fact that $[X_n/\SS_n]$ does not admit an analytification is perhaps not surprising, if one accepts that no sensible map of analytic spaces could possibly correspond to the bug-eyed cover $[X_n/\SS_n] \to \A^n$. Thus if we wanted to apply Artin's theorem to $[X_n/\SS_n]$ we would have to form the analytification in a larger category of groupoids in complex analytic spaces, to which a version of Artin's theorem could have been applied only after a cohomological descent argument (which to our knowledge is not written out in the literature). 
\end{rem}
\subsection{General Weyl groups}
\begin{para}
An advantage of our geometric approach to Hyde's theorem is that it generalizes to a general Weyl group, as we now discuss. Suppose that we are given an $n$-dimensional root system, and that $W$ is the associated reflection group acting on $\A^n$. We will associate to this data a nonseparated scheme $X_W$ obtained by gluing together copies of $\A^n$ along complements of reflecting hyperplanes, in such a way that when $W$ is the symmetric group $\SS_n$ acting on $\A^n$ by permuting coordinates then we recover the scheme $X_n$. The correct definition of $X_W$ is clear once one realizes that a total order on the set $\{1,\ldots,n\}$ is precisely the same thing as a Weyl chamber for the symmetric group $\SS_n$.  More generally, a partition of the set $\{1,\ldots,n\}$ corresponds to a parabolic subgroup of $\SS_n$, and a total order on each block of the partition is a Weyl chamber for that parabolic subgroup. (We recall that a \emph{parabolic subgroup} of a reflection group $W$ is a subgroup which is the stabilizer of some point of $\A^n$.) Thus we define in general $X_W$ to be the nonseparated scheme parametrizing a point $y \in \A^n$ together with a Weyl chamber for the parabolic subgroup $\mathrm{Stab}(y)$. The stabilizer $\mathrm{Stab}(y)$ is itself a reflection group; it is the subgroup generated by reflections in hyperplanes containing $y$, so we can think of the data of a Weyl chamber for $\mathrm{Stab}(y)$ as a chamber of the sub-arrangement consisting of the reflecting hyperplanes through $x$. Just as $X_n$ is glued from $n!$ copies of $\A^n$, we may glue together $X_W$ from $\vert W \vert$ copies of $\A^n$, one for each Weyl chamber of $W$. 
\end{para}

\begin{para}
Let us now discuss how to generalize the statement of Hyde's theorem. Indeed, in formulating Hyde's theorem we used that to any element of $\Pol_n(\Fq) = (\A^n/\SS_n)(\Fq)$ we can associate a well defined conjugacy class in $\SS_n$. In the squarefree case we could take this conjugacy class to be the action of Frobenius on the set of roots of the polynomial, i.e.\ on the fiber of $\A^n \to \A^n/\SS_n$, but for a polynomial with repeated roots this does not prescribe a unique conjugacy class and our recipe was instead that we took the conjugacy class specified by the partition given by the degrees of the irreducible factors. How should this procedure be generalized?
\end{para}
\begin{para}\label{torsor}
First, we take a step back and consider a finite group $G$ acting \emph{freely} on a scheme $X$ over $\Fq$. Then any $\Fq$-point $x$ of $X/G$ determines a conjugacy class of elements of $G$. Indeed we lift $x \in (X/G)(\Fq)$ to a point $y \in X(\overline{\mathbf F}_q)$, and there is a unique element $g \in G$ such that $g \cdot y = \mathrm{Frob}(y)$. If we had chosen another lift $y$ then the element $g$ would have been changed by conjugation, so the conjugacy class of $g$ is independent of choices made. Alternatively, the conjugacy class is the image of Frobenius under the composite
$$ \widehat\Z \cong \pi_1^{\text{\emph{\'et}}}(\mathrm{Spec}(\Fq)) \stackrel x \longrightarrow \pi_1^{\text{\emph{\'et}}}(X/G) \to G,$$
where we omit basepoints and the second homomorphism classifies the $G$-torsor $X \to X/G$. In any case, if $G$ does not act freely on $X$ then we can still lift  $x \in (X/G)(\Fq)$ to $y \in X(\overline{\mathbf F}_q)$, but now an element $g \in G$ carrying $y$ to $\mathrm{Frob}(y)$ is only well defined as a left coset modulo the subgroup $\mathrm{Stab}(y)$; we do not get a well defined conjugacy class in $G$ but rather a conjugacy class of cosets, and in general this is the best we can say. 
\end{para}
\begin{para}\label{minimalparabolic}

Now suppose however that we are given a Weyl group $W$ acting on $\A^n$ as above. As already mentioned, a parabolic subgroup $\mathrm{Stab}(y)$ is itself a reflection group; it is the subgroup generated by reflections through hyperplanes containing $y$. In particular $\mathrm{Stab}(y)$ acts freely and transitively on the set of chambers of the arrangement of reflecting hyperplanes through $y$. An immediate consequence is that if we fix the choice of a Weyl chamber $C_0$ for $W$, then any coset of a parabolic subgroup has a distinguished representative: every left coset of $\mathrm{Stab}(y)$ contains a unique element $w$ with the property that $w \cdot C_0$ and $C_0$ are on the same side of every reflecting hyperplane passing through $y$. This element $w$ is called the \emph{minimal representative} of the coset. It can also be characterized as the unique element of the coset of smallest Coxeter length with respect to $C_0$. See e.g. \cite[I.5-1, I.5-2]{kane}. It follows that in the Weyl group case we \emph{can} in a canonical manner associate to an $\Fq$-point $x$ of $\A^n/W$ a conjugacy class of elements of $W$.  Indeed, we may lift $x$ to a point $y \in \A^n(\overline{\mathbf F}_q)$, there is a unique left coset modulo $\mathrm{Stab}(y)$ carrying $y$ to $\mathrm{Frob}(y)$, and this coset has a unique minimal representative $w$ once we choose a base chamber $C_0$. Choosing a different base chamber or a different lift would have the effect of changing $w$ by conjugation, so we get a well defined conjugacy class in $W$ associated to $x$. If $\chi \colon W \to \C$ is a class function then we denote $\chi(x)$ the value of $\chi$ on the conjugacy class associated to $x \in (\A^n/W)(\Fq)$. 
\end{para}
\begin{para}
We may now state the general form of Hyde's result. We state the result for a Weyl group for the convenience of having everything defined over $\Z$; for a general finite Coxeter group the scheme $X_W$ would be defined over the ring of integers of a number field and the statement of the following theorem would become more complicated.  
\end{para}

\begin{thm}Let $W \subset \mathrm{GL}(n, \Z)$ be a Weyl group acting by reflections on affine $n$-space. Let $\chi \colon W \to \C$ be a class function. Let $M_W$ be the complement of the associated codimension 3 subspace arrangement in $(\R^3)^n$. For any finite field $\Fq$ we have
\[ \sum_{x \in (\A^n/W)(\Fq)} \chi(x) = q^n \sum_{i\geq 0} q^{-i} \Big \langle \chi,\operatorname{ch} H^{2i}(M_W,\Q)\Big\rangle_{W}. \]
\end{thm}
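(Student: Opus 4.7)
The plan is to derive the formula as an instance of the Grothendieck--Lefschetz trace formula \eqref{GL} applied to the algebraic space $\mathcal X = [X_W/W]$, with coefficients in the $\ell$-adic local system $\V_{\chi,\ell}$ induced by the \'etale $W$-torsor $X_W \to [X_W/W]$. By construction, the copies of $\A^n$ that have been duplicated in $X_W$ are exactly those where $W$ fails to act freely on $\A^n$, so $W$ acts freely on $X_W$ and $[X_W/W]$ has trivial isotropy: the right-hand side of \eqref{GL} is unweighted, and it suffices to match each side of the trace formula with the corresponding side of the theorem.

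First I would identify the left-hand side of \eqref{GL}. By construction, $[X_W/W] \to \A^n/W$ is a bug-eyed cover, hence a bijection on $\Fq$-points, so the sum ranges over $(\A^n/W)(\Fq)$. To compute the trace of Frobenius on the stalk of $\V_{\chi,\ell}$ over $x \in (\A^n/W)(\Fq)$, I would lift $x$ to $y \in \A^n(\overline{\mathbf F}_q)$; then the monodromy of $X_W \to [X_W/W]$ at $x$ is the left coset $w\cdot \mathrm{Stab}(y) \subset W$ of elements sending $y$ to $\mathrm{Frob}(y)$. Since $X_W$ parametrizes, in addition to $y$, a Weyl chamber for the parabolic subgroup $\mathrm{Stab}(y)$, a geometric lift to $X_W$ amounts to a choice of base chamber $C_0$, and Frobenius acts through the minimal coset representative of \S\ref{minimalparabolic}. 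Thus $\operatorname{Tr}(\Phi_q \mid (\V_{\chi,\ell})_{\overline x}) = \chi(x)$, as required.

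For the right-hand side I would rely on the two structural results about $X_W$ stated in the introduction and proved in the second part of the paper: a $W$-equivariant weak equivalence $M_W \simeq X_W(\C)$, realized by an $\R^n$-fiber bundle generalizing the forget-the-$z$-coordinate construction of \S\ref{subsec:bug-eyed}; and an algebraic cell decomposition of $X_W$ over $\Z$, obtained from any valid order on the chambers of the Weyl arrangement via Delucchi and Lofano--Paolini. The cell decomposition implies that $H^{2i}_{\text{\'et}}(X_{W,\overline{\mathbf F}_q},\Q_\ell)$ is pure Tate of weight $2i$, vanishes in odd degree, and has dimension independent of characteristic; although the decomposition is not $W$-equivariant, each $W$-isotypic component is still a locally constant summand on $\mathrm{Spec}(\Z[1/\ell])$ by Maschke. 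Combined with the Artin comparison theorem applied to $X_W(\C) \simeq M_W$, this transports the character identity $\langle \chi, \operatorname{ch} H^{2i}_{\text{\'et}}(X_{W,\overline{\mathbf F}_q},\Q_\ell)\rangle_W = \langle \chi,\operatorname{ch} H^{2i}(M_W,\Q)\rangle_W$ to every characteristic coprime to $\ell$; purity forces all Frobenius eigenvalues on $H^{2i}$ to equal $q^{-i}$, and substitution into \eqref{GL} yields precisely $q^n\sum_{i\geq 0} q^{-i}\langle \chi,\operatorname{ch} H^{2i}(M_W,\Q)\rangle_W$.

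The hard part is not this assembly but the two structural inputs. Constructing the $W$-equivariant $\R^n$-fibration $M_W \to X_W(\C)$ for a general Weyl arrangement is the subtler of the two: in Type A it is simply the map which forgets the $z$-coordinate of each point, but for a general subspace arrangement one must identify the correct replacement. Almost equally delicate is upgrading the Delucchi--Lofano--Paolini valid-order theorem, a combinatorial statement about real chambers, to an honest algebraic cell decomposition of the nonseparated scheme $X_W$ over $\Z$. Once both of these, together with the minimal-representative interpretation of $\chi(x)$, are in hand, the Grothendieck--Lefschetz argument above is essentially formal.
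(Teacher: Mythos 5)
Your proposal follows the paper's argument step for step: Grothendieck--Lefschetz applied to $[X_W/W]$ with coefficients in $\V_{\chi,\ell}$, the free $W$-action and the bug-eyed cover $[X_W/W] \to \A^n/W$ to identify the left-hand side as a sum over $(\A^n/W)(\Fq)$, the identification of the Frobenius trace with $\chi(x)$ via minimal coset representatives, and the right-hand side via the fiber bundle $M_W \to X_W(\C)$, the algebraic cell decomposition from the Delucchi/Lofano--Paolini valid-order theorem, and purity plus the Maschke argument for all characteristics. This is the paper's proof. One small imprecision: a geometric lift of $x$ to $X_W(\overline{\mathbf F}_q)$ is a pair $(y,C)$ with $C$ a chamber of the parabolic $\mathrm{Stab}(y)$, not a choice of base chamber $C_0$ for the full group $W$; the minimal-representative identification then requires, as the paper spells out, that one choose $C_0 \subset C$, and the reason the unique $w$ with $w\cdot(y,C)=(\mathrm{Frob}(y),C)$ is the minimal representative is precisely that $w\cdot C = C$ forces $w\cdot C_0$ and $C_0$ to lie on the same side of every reflecting hyperplane through $y$. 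You assert the conclusion without this one-line justification, but that is the only gap and it does not affect the logic.
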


\begin{para}
 The proof follows just as the proof of Hyde's theorem, by applying the Grothendieck--Lefschetz trace formula to $[X_W/W]$ and combining the following ingredients:
\begin{itemize}
\item The $W$-equivariant weak homotopy equivalence $M_W \simeq X_W(\C)$ (\cref{weakequivalence}).
\item The space $X_W$ admits an algebraic cell decomposition over the integers, so that its cohomology is pure of Tate type, with Betti numbers independent of characteristic of the base field (\cref{exists cell decomposition} and \cref{cellprop}).
\item $W$ acts freely on $X_W$, and the algebraic space $[X_W/W]$ is a bug-eyed cover of $\A^n/W\cong \A^n$, so that there is a bijection of $\Fq$-points $[X_W/W](\Fq) \stackrel\sim\longrightarrow (\A^n/W)(\Fq)$.
\item Under this bijection, the $W$-conjugacy class associated to an element of $[X_W/W](\Fq)$ (as in \S\ref{torsor}, using that $W$ acts freely on $X_W$) concides with the conjugacy class associated to the corresponding element of $(\A^n/W)(\Fq)$ (as in \S\ref{minimalparabolic}).
\end{itemize}
The first three points work out in exactly the same way as for Hyde's theorem, so let us only comment on the last point. Pick a point $x \in (\A^n/W)(\Fq) = [X_W/W](\Fq)$. An $\overline{\mathbf F}_q$-point of $X_W$ over $x$ is given by a pair $(y,C)$ where $y \in \A^n(\overline{\mathbf F}_q)$ is in the fiber of $\A^n \to \A^n/W$ over $x$, and $C$ is a Weyl chamber of $\mathrm{Stab}(y)$ acting on $\A^n$. The Galois group acts as 
$$ \mathrm{Frob}((y,C)) = (\mathrm{Frob}(y),C)$$
and since $W$ acts freely on $X_W$ there is a unique $w \in W$ such that $w \cdot (y,C) = (\mathrm{Frob}(y),C)$. The claim is that if we choose a base chamber $C_0$ for the Coxeter group $W$ contained inside $C$ then $w$ is the minimal representative inside the coset consisting of elements which move $y$ to $\mathrm{Frob}(y)$. Indeed, $w \cdot C_0$ and $C_0$ being on the same side of every reflecting hyperplane for the parabolic subgroup $\mathrm{Stab}(y)$ implies that $w$ preserves the Weyl chamber of this parabolic subgroup containing $C_0$, i.e. \ $C$. This completes the proof of the generalized version of Hyde's theorem. 
\end{para}

\begin{rem}A cute application of our construction for a general reflection group is as follows. Consider a finite reflection group $W$ acting on affine space. Consider on one hand our space $X_W$, which parametrizes pairs $(y,C)$ of a point in affine space and $C$ a chamber of $\mathrm{Stab}(y)$, and on the other hand the ``inertia variety'' $I_W$ parametrizing pairs $(y,w)$ with $y$ a point in affine space and $w$ an element of $\mathrm{Stab}(y)$. Since $\mathrm{Stab}(y)$ acts freely and transitively on its set of chambers we obtain a set-theoretic bijection between $X_W$ and $I_W$ after choosing a base chamber $C_0$ of $W$: indeed, such a choice fixes also the choice of a base chamber for each parabolic $\mathrm{Stab}(y)$, and hence a bijection between $\mathrm{Stab}(y)$ and its set of chambers for every point $y$. Now
$$ I_W \cong \coprod_{w \in W} \mathrm{Fix}(w),$$
where $\mathrm{Fix}(w) \subseteq \mathbb A^n$ denotes the linear subspace fixed by $w$, and each subset $\mathrm{Fix}(w)$ is carried isomorphically to a locally closed subscheme of  $X_W$ under this bijection. In particular we may partition both $X_W$ and $I_W$ into locally closed subschemes which are pairwise isomorphic; both spaces $X_W$ and $I_W$ have the same class in the Grothendieck ring of varieties, and we obtain a calculation of $[X_W] \in K_0(\mathrm{Var})$. Thus for any algebraic cell decomposition of $X_W$ the number of $d$-dimensional cells in the decomposition is just the number of $w \in W$ whose set of fixed points has dimension $d$. Since the Betti numbers of the complement of the arrangement of reflecting hyperplanes can be read off from a cell decomposition we obtain the formula
$$ \sum_{w \in W} q^{\mathrm{codim}\,\mathrm{Fix}(w)} = \sum_i \dim H^{2i}(M_W,\Q) \cdot q^i.$$
The result is of course not new: both sides are equal to $\prod_i (1+m_iq)$, where $m_i$ are the exponents of the group, by classical results of Shephard-Todd \cite[Theorem 5.3]{shephardtodd} and Brieskorn \cite[Th\'eor\`eme 6(ii)]{brieskorn}, respectively. Strictly speaking Brieskorn's results concern the cohomology of the complement of the \emph{complexified} arrangement, so we also need to know that the complements of the arrangements in $\C^n$ and $(\R^3)^n$ have the same Betti numbers up to a degree shift, which is for example a consequence of the Goresky--MacPherson formula \cite{goreskymacpherson}. \end{rem}

\begin{rem}The construction of the previous paragraph produces a decomposition of $X_W$ into pairwise disjoint locally closed subschemes isomorphic to affine spaces, indexed by elements $w \in W$, after the choice of a base chamber. At an earlier stage of this project we expected this decomposition to be an algebraic cell decomposition in the sense of \cref{cell def}. This is not true, however: for the scheme $X_4$, the ``cells'' corresponding to the permutations $(14)$ and $(13)(24)$ both have the property that they intersect each other's closure nontrivially. According to \cref{exists cell decomposition} it is still true that $X_W$ admits an algebraic cell decomposition, but the construction is more nontrivial. 
\end{rem}

\begin{rem}As explained by Hyde \cite[Section 2.5]{hyde}, one can evaluate the $q = 1$ specialization of Hyde's theorem to produce an expression for the direct sum of all cohomology groups of $\PConf_n(\R^3)$ as a representation of $\SS_n$, and the result is that 
$$ \bigoplus_k H^{k}(\PConf_n(\R^3),\Q) \cong \Q[\SS_n],$$
i.e.\ one obtains the regular representation. (As Hyde points out, the result is not new.) It is natural to ask whether our geometric approach can shed light on this observation, in particular given that setting $q=1$ amounts to counting the number of cells in an algebraic cell decomposition. Unfortunately we do not see an easy way to see this result (in particular since the cell decomposition is not equivariant), but we remark that the analogous statement remains true for any finite reflection group; see for example \cite[Theorem 1.4(b)]{moseley}. Let us give a short proof. Suppose $W$ acts by reflections on $\R^n$, and for any $d \geq 1$ let $M^{(d)}$ denote the associated codimension $d$ subspace arrangement in $(\R^d)^n$. We claim that the $W$-equivariant Euler characteristic of $M^{(d)}$ depends only on the parity of $d$. Indeed, writing $\R^d = \C \times \R^{d-2}$ we get an induced $\C^\ast$-action on $M^{(d)}$ with fixed point set $M^{(d-2)}$, and by localization the two Euler characteristics coincide. More explicitly, one may apply Mayer--Vietoris to  the open cover of $M^{(d)}$ given by the complement of $M^{(d-2)}$ (whose Euler characteristic vanishes because $\C^\ast$ acts freely) and the $W \times \C^\ast$-equivariant tubular neighborhood of $M^{(d-2)}$ given by $\C^n \times M^{(d-2)}$. When $d=1$ we have that $M^{(1)}$ is a disjoint union of contractible components which are freely and transitively permuted by $W$, so in this case the equivariant Euler characteristic is the regular representation of $W$. Then the same is true for $M^{(3)} = M_W$, and since $M_W$ has no odd cohomology the claim follows. 
\end{rem}

\subsection{Type B and even polynomials}

\begin{para}
Let us examine our formula more closely in the type $B$ case. The type $B$ Weyl group is the hyper\-octahedral group $B_n \coloneqq \SS_n \ltimes \Z/2^n$,  which we can think of as the automorphism group of the $(\Z/2)$-set $[n]^\pm = \{1,-1, \dots, n,-n\}$.  The group $B_n$ acts on $\A^n = \Spec \Z[z_1, \dots, z_n]$,   and its ring of invariants is well known to be generated by
$$w_d(z) \coloneqq \sum_{1 \leq i_1 < \dots < i_d \leq n}  z_{i_1} ^2 \dots z_{i_d}^2$$
for $d = 1, \dots, n$. These invariants give coordinates on the quotient space $\A^n/B_n = \Spec(\Z[w_1, \dots, w_n])$.  We can interpret the coordinate  $w_d$ as the coefficient of  $t^{2d}$ in the product
$$\prod_{i = 1}^n  (t^2 - z_i^2)  = \prod_{i = 1}^n (t-z_i)(t+z_i).$$
Over an algebraically closed field, a polynomial $f(t)$ admits a factorization of this form if and only if it is \emph{even}.   Thus we see that ${\A}^n/B_n$ is a parameter space for monic degree $2n$  even polynomials, where $w_d$ records the $2d$th coefficient, and ${\A}^n \to {\A}^n/B_n$ is the finite map that takes $(z_1, \dots, z_n)$ to the even monic degree $2n$ polynomial $\prod_i  (t^2 - z_i^2)$.  In these terms, our space $X_{B_n}$ parameterizes even monic degree $2n$ polynomials,  together with a choice of labelling of the set of roots by the set $[n]^\pm$, subject to the condition that $\alpha_{-j} = - \alpha_j$ for all $j \in [n]^\pm$.  
\end{para}

\begin{para}
Let $f \in \Pol_{2n}^{\mathrm{even}} (\Fq)$ be an even monic degree $2n$ polynomial.  Its factorization into irreducibles takes the form  $$f(t) = \prod_{g \text{ even}} g(t) \prod_{h \text{ not even}} h(t)h(-t),$$ where we have used that the factorization into irreducibles must be invariant under the transformation $t \mapsto -t$.  We associate to $f$ a double integer partition of $n$, which we call its \emph{factorization type}:  \[(\lambda, \mu) = (1^{u_1} 2^{u_2} \dots, \overline 1^{v_1}   \overline 2^{v_2} \dots),~  d = \sum_{i} u_i i + \sum_{j} v_j j. \]  Here $u_i$ is the number of degree $2i$ irreducible even factors of $f$, and $v_j$ is the number of pairs $\{h(t),h(-t)\}$ of degree $j$ irreducible non-even factors of $f$.  Similarly, a signed permutation $\sigma \in B_n$ gives rise to a $(\Z/2)$-equivariant partition of the set $[n]^\pm$ into cycles, and the cycle type of $\sigma$ may be encoded by a double partition, \[(1^{a_1} 2^{a_2} \dots, \overline 1^{b_1}  \overline 2^{b_1} \dots),\] 
where $a_i$ denotes the number of $(\Z/2)$-invariant blocks of size $2i$ in the partition, and $b_j$ denotes the number of pairs of blocks of size $j$ that are interchanged by the $(\Z/2)$-action. This construction determines a bijection between conjugacy classes of $B_n$ and double partitions.  A polynomial $f$ has factorization type $(\lambda, \mu)$  if and only if Frobenius permutes a signed labelling of its roots in $X_{B_n}(\overline{\mathbf F}_q)$ by an element $\sigma \in B_n$  in the conjugacy class $(\lambda, \mu)$.  
Thus we have proved the following.    
\end{para}

\begin{thm} 
Let $\chi$ be a character of $B_n$,  considered as a function on the set of double integer partitions of $n$.   We write $\chi(f)$ for the the value of $\chi$ on the factorization type of $f$. Then

\[ \sum_{f \in \Pol_{2d}^{\mathrm{even}} (\Fq)} \chi(f) = q^n \sum_{i\geq 0} q^{-i} \Big\langle \chi,\operatorname{ch} H^{2i}(\PConf_n^{\Z/2} (\R^3),\Q) \Big\rangle_{B_n}. \]

Here $\PConf_n^{\Z/2}(\R^3)$ denotes the orbit configuration space, which parameterizes $(\Z/2)$-equivariant injections of $[n]^{\pm}$ into $\R^3$, considered with its $(\Z/2)$-action $x \mapsto -x$.  
\end{thm}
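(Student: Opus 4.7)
The plan is to obtain this theorem as the specialization of \cref{generalized hyde} to $W=B_n$, by separately matching both sides of that formula with the quantities appearing here.

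For the left-hand side, the invariant ring computation in the paragraph before the theorem identifies $(\A^n/B_n)(\Fq)$ with $\Pol_{2n}^{\mathrm{even}}(\Fq)$ via $(z_1,\ldots,z_n)\mapsto\prod_i(t^2-z_i^2)$. What remains is to check that the canonical $B_n$-conjugacy class attached to a point $x$ by the minimal representative construction of \S\ref{minimalparabolic} coincides with the factorization type of the corresponding even polynomial $f$. For this I would use the description of Frobenius on $X_{B_n}(\overline{\mathbf F}_q)$ from the proof of \cref{generalized hyde}: lifting $x$ to $(y,C)\in X_{B_n}(\overline{\mathbf F}_q)$ with $y=(z_1,\ldots,z_n)$ satisfying $f(t)=\prod_i(t^2-z_i^2)$ and $C$ a chamber of $\mathrm{Stab}(y)$ inside a fixed base chamber $C_0$, the minimal representative is the unique $w\in B_n$ with $w\cdot(y,C)=(\mathrm{Frob}(y),C)$. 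Under the $\Z/2$-equivariant labelling $[n]^\pm\cong\{\pm z_i\}$ of the roots of $f$, this $w$ is precisely the signed permutation encoding the Frobenius action on the signed root set. Decomposing $w$ into $\Z/2$-invariant cycles and $\Z/2$-paired cycles then recovers the factorization type as a double partition: $\Z/2$-invariant cycles in $[n]^\pm$ correspond to Frobenius orbits of roots preserved by negation, i.e.\ to irreducible even factors of $f$, while $\Z/2$-paired cycles correspond to Frobenius orbits interchanged by negation, i.e.\ to pairs $\{h,h(-t)\}$ of non-even irreducible factors.

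For the right-hand side, I would directly analyze the Type B arrangement. The reflecting hyperplanes of $B_n$ in $\A^n$ are $\{z_i=\pm z_j\}$ for $i\neq j$ together with the coordinate hyperplanes $\{z_i=0\}$, so the complement $M_{B_n}$ of the codimension-three analogue in $(\R^3)^n$ consists of tuples $(x_1,\ldots,x_n)$ with each $x_i\neq 0$ and $x_i\neq\pm x_j$ for $i\neq j$. The assignment $i\mapsto x_i$, $-i\mapsto-x_i$ identifies this complement $B_n$-equivariantly with the orbit configuration space $\PConf_n^{\Z/2}(\R^3)$ of $\Z/2$-equivariant injections $[n]^\pm\hookrightarrow\R^3$, where $\Z/2$ acts on $\R^3$ by $x\mapsto -x$. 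Plugging both identifications into \cref{generalized hyde} now yields the stated formula.

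The main subtlety is the first identification --- ensuring that the abstract $B_n$-conjugacy class produced by the minimal representative construction is really the factorization type, rather than the conjugacy class of some other signed permutation in the coset $w\cdot\mathrm{Stab}(y)$ (these can genuinely differ, since for instance an element $s$ swapping $+i$ with $-i$ at a zero coordinate $z_i=0$ lies in $\mathrm{Stab}(y)$ but contributes an extra $\Z/2$-invariant $2$-cycle). The point that makes it work is that the choice of base chamber $C_0\subset C$ singles out the ``natural'' signed permutation whose cycle structure on $[n]^\pm$ is as simple as possible consistent with the Frobenius action on the underlying root set; the verification can be made concrete by checking in the two basic cases --- a single Frobenius orbit on roots closed under negation, and a pair of orbits interchanged by negation --- that the minimal representative's cycle structure matches the expected contribution to the factorization type.
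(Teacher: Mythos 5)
Your proposal follows essentially the same route as the paper: specialize \cref{generalized hyde} to $W=B_n$, identify $(\A^n/B_n)(\Fq)$ with even monic degree-$2n$ polynomials via the invariant ring, and identify $M_{B_n}$ with $\PConf_n^{\Z/2}(\R^3)$ via the reflecting hyperplanes $\{z_i=\pm z_j\}$ and $\{z_i=0\}$. You spell out two details more explicitly than the paper does — the identification of $M_{B_n}$ with the orbit configuration space, and the check that the minimal-representative construction really yields the factorization type rather than some other element of the coset $w\cdot\mathrm{Stab}(y)$ — but the content and structure of the argument coincide.
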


\begin{para}Thus we see that in the type $B$ case,  our generalized version of Hyde's theorem  gives a formula for the factorization statistics of even polynomials. These statistics record the number of even and non-even factors in the factorization type of $p$.   As mentioned previously,  Jim\'enez Rolland and Wilson \cite{jimenezrollandwilson} studied a Type $B$ analog of squarefree factorization statistics.   Their formulation of factorization statistics is different, but equivalent to the one used here.   Jim\'enez Rolland--Wilson consider monic squarefree polynomials $q(t)$ of degree $d$ with nonzero constant term, and their statistics concern the factorization of $q(t)$ into irreducibles and whether the roots of $q$ are quadratic residues.   The statistics we consider are related to theirs by the transformation $f(t) = q(t^2)$.  Indeed, let $i(t)$ be a degree $d$ irreducible polynomial. Then $i(t^2)$ either remains irreducible or factors as $h(t)h(-t)$.  Further, $i(t^2)$ factors if and only if all of the roots of $i(t)$ in $\mathbf F_{q^{\deg i}}$  are quadratic residues, and $i(t^2)$ remains irreducible if and only if all of the roots of $i(t)$ are non-residues.  
\end{para}

\section{Non-Hausdorff models for complements of arrangements}\label{section three}

\subsection{Conventions and terminology on hyperplane arrangements}

\begin{para}Let $A$ be a finite set of affine hyperplanes in $\R^n$. We let $\L(A)$ denote the partially ordered set consisting of all nonempty intersections of hyperplanes in $A$, including $\R^n$ itself as the empty intersection, ordered by reverse inclusion. Elements of $\L(A)$ are called \emph{flats}. If $K \in \L(A)$ is a flat, then $K \setminus \bigcup_{K<L} L$ is a finite union of convex open regions inside $K$ called \emph{faces}. We denote by $\F(A)$ the set of all faces of the arrangement $A$. We partially order the set of faces according to 
$$ F \leq G \iff \overline F \supseteq G.$$
If $E$ is a subset of affine $n$-space (typically a flat), then we denote by $\Supp(E) \subseteq A$ the set of hyperplanes containing $E$. 
\end{para}
\begin{para}
The minimal elements of $\F(A)$ are called \emph{chambers}, and we denote the set of chambers by $\Ch(A)$. For $C, D \in \Ch(A)$ we denote by $\Sep(C,D) \subseteq A$ the subset of hyperplanes $H$ which separate $C$ from $D$, i.e.\ such that $C$ and $D$ are contained in different connected components of $V \setminus H$. 
\end{para}

\begin{para} We define $\mathrm{Strat}(A)$ to be the partially ordered set consisting of pairs $(K,C)$ where $K \in \L(A)$ and $C \in \Ch(\Supp(K))$. The partial order relation is given by $(K,C) \preceq (L,D)$ if $K \leq L$ and $C \supseteq D$. This poset will parametrize the strata for a natural stratification of the scheme $X_A$  which we will define shortly. \label{strat}
\end{para}

\begin{para}
The poset $\mathrm{Strat}(A)$ is an analogue of the \emph{Salvetti poset} $\Sal(A)$, which is the partially ordered set consisting of pairs $(F,C)$ where $F \in \F(A)$ and $C \in \Ch(\Supp(F))$. The partial order relation is given by $(F,C) \preceq (G,D)$ if $F \leq G$ and $C \supseteq D$. Usually in the literature elements of $\Sal(A)$ are described as pairs $(F,C)$ where $F \in \F(A)$ and $C \in \Ch(A)$ is a chamber adjacent to $F$ (i.e.\ whose closure contains $F$); this is equivalent to the definition given here, since every chamber of $\Supp(F)$ contains a unique chamber of $A$ that is adjacent to $F$. This poset was introduced by Salvetti \cite{salvetti}, who proved that the complement of the complexification of the arrangement $A$ deformation retracts onto a regular CW complex whose poset of cells is precisely the Salvetti poset. The similarity between the Salvetti poset and our stratification of $X_A$ is no coincidence, as we explain in \cref{connection to salvetti}.  \label{salvetti poset}
\end{para}

\subsection{Construction of the scheme \texorpdfstring{$X_A$}{XA}}

\begin{para}
Let $A$ be as above. Following Proudfoot \cite{proudfoot}, we are going to associate to $A$ a nonseparated scheme $X_A$ over $\Spec(\R)$, which will be obtained by gluing together a copy of the affine space $\A^n$ for each chamber of the arrangement. If all hyperplanes in $A$ are defined over a subring $R \subset \R$ then $X_A$ is defined over $R$, too. 
\end{para}
\begin{para}
Let us first recall Zariski gluing of schemes in general. Let $X = \bigcup_{i \in I} X_i$ be an open cover of a scheme $X$. Let $X_{ij} = X_i \cap X_j$. Then the identity map on $X$ induces a collection of isomorphisms $X_{ij} \stackrel \sim \to X_{ji}$ for all $i$ and $j$, and $X$ can be uniquely reconstructed from the collection $\{X_i\}$ and the isomorphisms $X_{ij} \cong X_{ji}$. In the other direction, let $\{X_i\}_{i \in I}$ be an arbitrary family of schemes. Suppose we are given
 Zariski open subset $X_{ij} \subset X_i$ for every $i, j \in I$, and an isomorphism $\phi_{ij} \colon X_{ij} \to X_{ji}$ for every $i, j \in I$.
This family of schemes $\{X_i\}$ may be glued together along the isomorphisms $\phi_{ij}$ to a scheme $X$, which is then unique up to canonical isomorphism, if and only if the gluing data satisfies the following two axioms:
\begin{itemize}
\item $X_{ii} = X_i$, and $\phi_{ii} = \mathrm{id}$ for all $i \in I$,
\item $\phi_{ij}$ restricts to an isomorphism from $X_{ij} \cap X_{ik}$ to $X_{ji} \cap X_{jk}$, and $\phi_{jk} \circ \phi_{ij} = \phi_{ik}$ on $X_{ij} \cap X_{ik}$, for every $i, j, k \in I$ (cocycle condition). 
\end{itemize} 
\end{para}
\begin{para}\label{construction}
We can now write down gluing data for the scheme $X_A$. For every $C \in \Ch(A)$ we let $X_C = \A^n$. For any pair $D, C \in \Ch(A)$ we define $$X_{CD} = \A^n \setminus \left(\bigcup_{H \in \Sep(C,D)} H\right) \subseteq X_C.$$ The identity map on $\A^n$ gives an isomorphism $\phi_{CD} \colon X_{CD} \cong X_{DC}$, and the resulting gluing data  is easily checked to satisfy the cocycle condition since $$\Sep(C,E) \subset \Sep(C,D) \cup \Sep(D,E).$$ We denote the resulting scheme by $X_A$. 
\end{para}

\begin{remark}
A reader may wonder why we took care to say that we glued together copies of the scheme $\A^n$ to obtain a scheme over $\Spec(\R)$, instead of just saying that we are gluing together copies of $\R^n$ or $\C^n$. The reason is that once we have a \emph{scheme} $X_A$ over a base ring $R$ then we can talk freely about its set of points over an arbitrary $R$-algebra. In particular, the scheme $X_A$ will be defined over $\Z$ whenever all the hyperplanes have integer coefficients, and in this case we can make sense of $X_A(\Fq)$ for any prime power $q$, which is at the core of our intended application to factorization statistics. If $K$ is a field and an $R$-algebra, then a $K$-valued point of $X_A$ is an element $(b_1,\ldots,b_n)$ of $K^n = \A^n(K)$, together with the choice of a chamber of $\Supp(\{(b_1,\ldots,b_n)\})$, i.e.\ the sub-arrangement of $A$ consisting of all hyperplanes containing $(b_1,\ldots,b_n)$. \end{remark}

\subsection{Construction of the fiber bundle}

\begin{para}
Let $A$ be a finite arrangement of hyperplanes in $V \cong \R^n$. We write $V_\C$ for the complexification of $V$, and $A_\C = \{ H_\C : H \in A\}$ for the induced arrangement of complex hyperplanes in $V_\C$. The complement of $A_\C$ in $V_\C$ will be denoted $M(A_\C)$. Similarly we write $V_{(3)} = V_\C \times V$, we set $A_{(3)}$ for the codimension $3$ subspace arrangement consisting of the subspaces $\{ H_\C \times H : H \in A\}$, and we let $M(A_{(3)})$ denote the complement of this codimension $3$ arrangement. 
\end{para}

\begin{ex}If $A$ consists of the diagonal hyperplanes $x_i=x_j$ in $\R^n$, then $M(A_\C)$ is the configuration space of $n$ distinct ordered points in $\C$ and $M(A_{(3)})$ is the configuration space of $n$ distinct ordered points in $\R^3$. \end{ex}

\begin{para}
If $C \in \Ch(A)$ is a chamber, then  define 
\begin{equation*}U_C = \left\{ (x,z) \in  V_\C \times V : \hspace{.5em}\parbox{15em}{if $x \in H_\C$, then $z$ is in the \\ component of $V \setminus H$ containing $C$}\right\}.\end{equation*}
Observe that the collection $\{U_C\}_{C \in \Ch(A)}$ is an open cover of $M(A_{(3)})$. Write $f$ for the projection from $V_\C \times V$ onto $V_\C$. Note that 
\begin{equation} \label{gluing condition for map} f(U_C \cap U_D) = V_\C \setminus\bigcup_{H \in \Sep(C,D)} H_\C.\end{equation}
\end{para}\begin{para}\label{construction of map}
Recall from \S\ref{construction} the open cover $X_A = \bigcup_{C \in \Ch(A)} X_C$ of the scheme $X_A$, where each $X_C$ is an affine space $\A^n$. We will now construct a continuous map $M(A_{(3)}) \to X_A(\C)$: 
\begin{itemize}
\item Cover $M(A_{(3)})$ by the open sets $U_C$. 
\item Map each open set $U_C$ onto $V_\C \cong X_C(\C)$ via $f$. 
\item By comparing Equation \eqref{gluing condition for map} and the description of the intersections $X_C \cap X_D = X_{CD}$ from  \S\ref{construction}, observe that these glue to a map $M(A_{(3)}) \to X_A(\C)$.
\end{itemize}\end{para}

\begin{thm}The map $M(A_{(3)}) \to X_A(\C)$ is a fiber bundle with fiber $\R^n$. \label{weakequivalence}
\end{thm}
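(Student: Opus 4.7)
I will verify local triviality of $p \colon M(A_{(3)}) \to X_A(\C)$ over the open cover of $X_A(\C)$ by the charts $\{X_{C'}(\C)\}_{C' \in \Ch(A)}$; in fact I will show $p$ is trivial over each such chart.

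First, identify the fiber. From the gluing in \S\ref{construction}, a $\C$-point of $X_A$ is naturally encoded as a pair $(x,\bar C)$ with $x \in V_\C$ and $\bar C$ a chamber of the sub-arrangement $\Supp(x) \coloneqq \{H \in A : x \in H_\C\}$: two copies $x \in X_C(\C)$ and $x' \in X_D(\C)$ are identified in $X_A(\C)$ precisely when $x = x'$ and $C,D$ lie in the same chamber of $\Supp(x)$. The map constructed in \S\ref{construction of map} sends $(x,z) \in M(A_{(3)})$ to $(x,\bar C)$, where $\bar C$ is the chamber of $\Supp(x)$ containing $z$ --- well-defined because the condition $(x,z) \in M(A_{(3)})$ forces $z \notin H$ for every $H \in \Supp(x)$. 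Hence the fiber over $(x,\bar C)$ is $\bar C$ viewed as an open convex region in $V$, which is diffeomorphic to $\R^n$.

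Second, reduce to the charts. By construction, $p^{-1}(X_{C'}(\C)) = U_{C'}$, and $U_{C'}$ is open in $V_\C \times V$ as a finite intersection over $H \in A$ of the open conditions $\{\ell_H^\C(x) \neq 0\} \cup \{\sigma_H(C')\,\ell_H(z) > 0\}$, where $\ell_H$ is a defining linear form for $H$ and $\sigma_H(C') \in \{\pm 1\}$ is the sign of $\ell_H$ on $C'$. It therefore suffices to exhibit, for each $C' \in \Ch(A)$, a diffeomorphism $\Phi_{C'} \colon V_\C \times V \stackrel{\sim}{\to} U_{C'}$ commuting with the projection to $V_\C$.

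Third, construct $\Phi_{C'}$ by stretching. The heuristic is that when $x$ is close to a hyperplane $H_\C$, the fiber is forced into the half-space $\{\sigma_H(C')\,\ell_H > 0\}$, so the $V$-variable should be deformed by a smooth diffeomorphism $\R \stackrel{\sim}{\to} (0,\infty)$ in the $\ell_H$-direction. Fix a smooth bump $\rho \colon \R_{\geq 0} \to [0,1]$ with $\rho(0) = 0$ and $\rho(r) = 1$ for $r \geq 1$, set $\lambda_H(x) \coloneqq \rho(|\ell_H^\C(x)|)$, and for each $H$ pick a vector $n_H \in V$ with $\sigma_H(C')\,\ell_H(n_H) > 0$. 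Define $\Phi_{C'}(x,t) = (x, z(x,t))$ with
\[
z(x,t) \;=\; t + \sum_{H \in A} \alpha_H\bigl(\lambda_H(x),\,\sigma_H(C')\,\ell_H(t)\bigr)\, n_H,
\]
for smooth correction functions $\alpha_H(\lambda, s)$ that vanish whenever $\lambda = 1$ and, when $\lambda = 0$, are so arranged that the $\ell_H$-component of $z$ traces out $(0,\infty)$ as $s$ ranges over $\R$. The toy case $V = \R^1$, $A = \{0\}$ is illustrative: the recipe reduces to $z(x,t) = \lambda(x) t + (1 - \lambda(x)) e^t$, directly checked to be a diffeomorphism $\C \times \R \stackrel{\sim}{\to} U_{C_+} = \C \times \R \setminus \{0\} \times (-\infty, 0]$.

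The main obstacle is that when $|A| > n$ the defining forms $\{\ell_H\}$ are linearly dependent, so perturbations in different $n_H$-directions interact and one must verify globally that $t \mapsto z(x,t)$ is a diffeomorphism $V \to \bar C_x$ for every $x$. I plan to arrange the $\alpha_H$ and $n_H$ so that the Jacobian $D_t z(x,\cdot)$ has the form $\mathrm{Id} + (\text{positive semidefinite})$ --- e.g.\ by choosing each $n_H$ to be the metric dual of $\sigma_H(C')\,\ell_H$ with respect to a fixed inner product on $V$, and each $\alpha_H$ monotonic in $s$ --- which makes it automatically invertible. A straight-line interpolation between the trivial identity trivialization (valid on the complement of the arrangement, where all $\lambda_H = 1$) and the pure-stretch case then controls global injectivity and surjectivity onto the open convex region $\bar C_x$. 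As a fallback, I would induct on $|A|$ by peeling off one hyperplane at a time, reducing the inductive step to the single-hyperplane trivialization handled by the toy construction.
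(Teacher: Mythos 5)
Your first two steps match the paper exactly: you identify the fiber over $(x,\bar C)$ as the chamber $\bar C$ of $\Supp(x)$ containing the chosen $C'$, and you reduce to showing triviality over each chart $X_{C'}(\C) \cong V_\C$. Where you diverge is the third step, and that is where a genuine gap remains.

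The paper does \emph{not} attempt an explicit trivialization adapted to the linear structure of $A$. Instead it isolates the single feature that matters --- each fiber $U_{C'} \cap (\{x\} \times V)$ is a \emph{nonempty open convex} subset of $V$ --- and proves a soft general lemma (\cref{bundle lemma}): an open subspace $U$ of a rank-$n$ vector bundle $E \to X$ over a metrizable base with nonempty convex fibers is automatically a trivial $\R^n$-bundle. The proof uses a partition of unity to get a section, radial coordinates to express $U$ via a lower semicontinuous ``radius'' function $\phi \colon S \to (0,\infty]$ on the sphere bundle, and an exhaustion by continuous functions $\psi_k \nearrow \phi$ to build compatible disk-bundle trivializations. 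Convexity is exactly what makes the radial description valid, and no interaction between the defining forms $\ell_H$ ever arises.

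Your explicit-stretching construction, by contrast, has a concrete unresolved problem that you yourself flag but do not close: \emph{surjectivity onto $\bar C_x$ when the $\ell_H$ are linearly dependent.} Making the Jacobian $\mathrm{Id} + (\text{psd})$ gives you injectivity cheaply, but it says nothing about the image. The issue is the cross-terms: after your perturbation, the $\ell_{H_1}$-coordinate of $z(x,t)$ is
\[
\sigma_{H_1}(C')\,\ell_{H_1}(z)\;=\;\sigma_{H_1}(C')\,\ell_{H_1}(t)\;+\;\sum_{H}\alpha_H\bigl(\lambda_H(x),\sigma_H(C')\ell_H(t)\bigr)\,\sigma_{H_1}(C')\,\ell_{H_1}(n_H),
\]
and when $x \in H_{1,\C}$ with $x$ merely \emph{near} some other $H_{2,\C}$ (so $\lambda_{H_2}(x)$ small but positive), the $H_2$-term contributes a quantity controlled by $\ell_{H_2}(t)$, which is uncoupled from $\ell_{H_1}(t)$. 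As $\sigma_{H_1}(C')\ell_{H_1}(t) \to -\infty$ you need the displayed quantity to sweep out $(0,\infty)$, but the cross-term can be large of either sign; you never show the interference cancels, and ``straight-line interpolation'' does not obviously tame it since the interpolated map need not have image contained in, let alone equal to, $\bar C_x$. The fallback induction on $|A|$ also isn't set up to work: removing a hyperplane $H$ changes both $U_{C'}$ and the target fibers globally, not just in a collar of $H_\C$, so the ``peeling off'' step is not reduced to the one-hyperplane toy case. In short, your plan is plausible but incomplete precisely where the difficulty lies; the paper's route via \cref{bundle lemma} avoids this entirely by never writing down a trivialization, only proving one exists from convexity.
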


\begin{proof}The question is local on the base, so it suffices to show that we get a fiber bundle over each set of the open cover $X_A(\C) = \bigcup_{C \in \Ch(A)} X_C(\C)$, i.e.\ that $U_C \to V_C  \cong \C^{n}$ is a fiber bundle for every $C \in \Ch(A)$. Now $U_C$ is an open subset of the trivial vector bundle $\R^{3n} \cong V_\C \times V  \to V_\C$. Moreover, the fiber of $U_C \to V_\C$ over an element $x \in V_\C$ is nothing but the chamber of the arrangement $\Supp(x)$ containing $C$, considered as an open subspace of $V$. In particular the fibers are nonempty and convex, which by the following \cref{bundle lemma} implies that the map is a fiber bundle. \end{proof}

\begin{lem}\label{bundle lemma}Let $p \colon E \to X$ be a rank $n$ vector bundle over a metrizable topological space, and $U \subset E$ an open subspace such that $U \cap p^{-1}(x)$ is convex and nonempty for all $x\in X$. Then $U \to X$ is an $\R^n$-fiber bundle, and $U$ is homeomorphic to $E$ as a bundle over $X$. 
\end{lem}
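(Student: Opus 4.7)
The plan is to construct a homeomorphism $\Phi\colon E \to U$ over $X$; once this is in hand, $U \to X$ inherits the structure of a fiber bundle from the projection $E \to X$.

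Since the conclusion is local on $X$ and $E$ is itself locally trivial, I may restrict to a small open set over which $E \cong X \times \R^n$. The first step is to produce a continuous section $s\colon X \to U$. Local sections exist trivially: for any $x_0$ and any $u_0 \in U_{x_0}$, openness of $U$ in the vector bundle shows that the constant section $y \mapsto (y, u_0)$ in a local trivialization takes values in $U$ over some neighborhood of $x_0$. To globalize, I use that metrizability implies paracompactness, so I can choose a locally finite refinement and a subordinate partition of unity $\{\psi_i\}$ with local sections $s_i$; the convex combination $s(x) = \sum_i \psi_i(x) s_i(x)$ lies in $U$ because each fiber $U_x$ is convex. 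Translating inside each fiber of $E$ by $-s(x)$, I may assume the zero section of $E$ takes values in $U$, so $0 \in U_x$ for every $x$.

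With $0 \in U_x$, the Minkowski functional $\mu_x(v) = \inf\{t > 0 : v/t \in U_x\}$ is well-defined on each fiber and gives a fiberwise homeomorphism $\phi_x \colon \R^n \to U_x$, $\phi_x(v) = v/(1 + \mu_x(v))$, with inverse $u \mapsto u/(1 - \mu_x(u))$. (This formula works even when $U_x$ is unbounded, since the recession directions of $U_x$ simply have $\mu_x = 0$.) The candidate homeomorphism is $\Phi(x, v) := (x, \phi_x(v))$, and the remaining question is joint continuity.

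The main obstacle is precisely joint continuity of $(x, v) \mapsto \mu_x(v)$. Upper semicontinuity is immediate, since $\{\mu(x, v) < t\} = \{(x, v) : v/t \in U_x\}$ is open by openness of $U$. Lower semicontinuity is more subtle and does not follow formally from the hypotheses alone, because $U_x$ may ``grow'' discontinuously as $x$ leaves a lower-dimensional subset of $X$, causing $\mu_x(v)$ to drop. To handle this, I shrink $X$ so that openness of $U$ at the zero section provides a uniform radius $r > 0$ with $B(0, r) \subset U_x$ for all $x$; this gives a uniform Lipschitz estimate $|\mu_x(v) - \mu_x(v')| \le \|v - v'\|/r$. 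The cleanest way to finish is then to replace the naïve Minkowski map by an interpolation: define $\Phi(x, v) = v$ for $\|v\| \le r/2$, and use the Minkowski stretching only for $\|v\|$ larger than a continuously varying threshold that grows to $\infty$ as $x$ approaches any point where $U_x$ is smallest. For each fixed $v$, the formula eventually reduces to the identity in a neighborhood of such points, which gives continuity at the troublesome locus and reproduces the Minkowski homeomorphism fiberwise. Granted joint continuity, $\Phi$ is a continuous bijection with continuous inverse, hence a bundle homeomorphism $E \to U$ over $X$, as desired.
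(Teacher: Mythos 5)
You set up the problem the same way the paper does (partition of unity gives a section in $U$, translate it to the zero section, then try to stretch each fiber $\R^n$ radially onto $U_x$ via a Minkowski-type gauge), and you correctly identify the genuine obstacle: the gauge $\mu_x(v)$, equivalently the paper's radial function $\phi$ on the sphere bundle, is only semicontinuous jointly in $(x,v)$, not continuous, because $U_y$ can jump in size as $y$ leaves a small locus. In the intended application this really happens along every lower-dimensional stratum, so it cannot be argued away. However, your proposed fix is wrong, and the gap is fatal rather than cosmetic. You interpolate toward the identity with a threshold that ``grows to $\infty$ as $x$ approaches any point where $U_x$ is smallest,'' but those are exactly the points where the Minkowski stretching is \emph{indispensable}: there $U_x$ is a proper open convex subset of $\R^n$, and a map $\Phi(x,-)$ that ``eventually reduces to the identity'' at those $x$ is a self-map of $\R^n$, not a bijection onto $U_x$. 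Concretely, let $A$ be a single hyperplane: $U_y=\R^n$ for generic $y$, but $U_y$ is an open half-space when $y$ lies on the complexified hyperplane, and no interpolation with the identity can produce a homeomorphism onto those half-space fibers.

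The missing idea is to not write down a single homeomorphism by a formula at all. The paper instead approximates the lower-semicontinuous function $\phi\colon S\to\R_{>0}\cup\{\infty\}$ from \emph{below} by a strictly increasing sequence of \emph{continuous} positive functions $\psi_1<\psi_2<\cdots\to\phi$, using metrizability of the base. Each $\psi_k$ cuts out a fiberwise compact subset $B_k\subset U$, bounded by the graph of an honestly continuous function and hence fiberwise homeomorphic, by radial rescaling, to the radius-$k$ disk bundle of $E$; these homeomorphisms are chosen compatibly over $k$, and since the $B_k$ exhaust $U$ while the disk bundles exhaust $E$, one obtains a bundle homeomorphism $E\cong U$ in the colimit. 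The discontinuity of $\phi$ is never confronted directly; it is absorbed by passing to the union over $k$, with each finite stage entirely under control. A secondary issue: the assertion that $U\cong E$ as bundles over $X$ is not a local statement on $X$, so your opening reduction to a trivializing neighborhood of $E$ would need to be supplemented by a gluing argument for that part of the conclusion (it is harmless for the weaker statement that $U\to X$ is a fiber bundle, which is what the application actually uses).
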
 

\begin{proof}By  a partition of unity there is a section contained in $U$, which we may assume to be the zero section. The strategy will be to construct an exhaustion of $U$ by fiberwise compact subsets
$$ \{0\} \Subset B_1 \Subset B_2 \Subset B_3 \Subset \ldots \qquad \bigcup_{k \geq 1} B_k = U,$$
together with a compatible system of homeomorphisms between $B_k$ and the disk bundle of radius $k$ inside $E$, for some bundle metric which we fix from now on. Let $S \to X$ be the unit sphere bundle inside $E$. Define a map $\phi \colon S \to \R_{>0} \cup \{\infty\}$ by
$$ \phi(z) = \sup \{ t \in \R : t \cdot z \in U\},$$
so  that if we identify $E^\ast$ (the complement of the zero section in $E$) with $S \times \R_{>0}$ then 
$$ U^\ast = \{(z,t) \in S \times \R_{>0} : t  <\phi(z)\}.$$
Since $U$ is open, $\phi$ is lower semicontinuous, and then there exists a strictly increasing sequence of continuous functions $\psi_1,\psi_2,\ldots \colon S \to \R_{>0}$ converging pointwise to $\phi$ (see e.g.\ \cite[Theorem 2.1.3]{ransford}). Define
$$ B_k^\ast =  \{(z,t) \in S \times \R_{>0} : t  \leq \psi_k(z)\}.$$
If we then let $B_k$ be the union of $B_k^\ast$ with the zero section then each $B_k$ is homeomorphic to the radius $k$ disk bundle by rescaling, these homeomorphisms may be chosen compatibly with each other, and by construction the bundles $B_k$ exhaust $U$. \end{proof}
 
\begin{rem}It is not hard to see that the construction of \cref{bundle lemma} can with only a little extra care be done smoothly, so that if the base  $X$ were a smooth manifold then $E$ and $U$ would be diffeomorphic. \end{rem}

\begin{rem}\label{remark on real points}The arguments given here generalize to show that similarly $M(A_\C)$ is an $\R^n$-fiber bundle over the set of \emph{real} points $X_A(\R)$ of $X_A$, which recovers (by a somewhat different argument) a theorem of Proudfoot \cite{proudfoot}. Even more generally, the induced $c$-arrangement in $(\R^c)^n$ is an $\R^n$-bundle over a non-Hausdorff topological space obtained by gluing together copies of the euclidean space $(\R^{c-1})^n$ along complements of subspaces in exactly the same way that we glued together $X_A$ from copies of affine space $\A^n$. The only difference when $c>3$ is that the base of the fiber bundle can not be interpreted as the real or complex points of an algebraic scheme.  \end{rem}

\subsection{Stratification of \texorpdfstring{$X_A$}{XA}} 

\begin{para}
Let $X$ be a topological space (or a scheme). A \emph{stratification} of $X$ is a decomposition of $X$ into finitely many nonempty pairwise disjoint locally closed subspaces (subschemes) $X_\alpha$, which are called \emph{strata}, such that the closure of a stratum is a union of strata. We partially order the set of strata by the rule
$$ \alpha \preceq \beta \iff \overline X_\alpha \supseteq X_\beta. $$
Observe that a stratification is uniquely determined by the set of \emph{closed strata} $\overline X_\alpha$, and we will sometimes find it convenient to describe a stratification of a space by specifying the closed strata. 
\end{para}\begin{para}\label{strat2}
In particular, if $A$ is an arrangement of hyperplanes in $\R^n$, then the collection of flats $\L(A)$ constitute the set of closed strata for a stratification of $\A^n$ into subschemes. When we form the scheme $X_A$ then we glue together copies of $\A^n$ along unions of strata, which means in particular that the stratification of $\A^n$ induces a stratification of the scheme $X_A$. Explicitly, we can understand $X_A$ and its stratification by starting from the scheme 
$$ \coprod_{C \in \Ch(A)} \A^n,$$
together with its natural stratification with strata indexed by $\L(A) \times \Ch(A)$. To obtain the scheme $X_A$ we need to impose the gluing relation on strata that the stratum $(K,C)$ is identified with $(L,D)$ if $K=L$, and the chambers $C$ and $D$ are not separated by any hyperplane containing $K$. Thus the poset of strata of $X_A$ is the poset $\mathrm{Strat}(A)$ introduced in \S\ref{strat}. 
 \end{para}

\begin{rem}\label{connection to salvetti}Each stratum $(K,C)$ of $X_A$ is itself the complement of an arrangement of hyperplanes inside a vector space of dimension $\dim (K)$. This implies in particular that the set of \emph{real} points of each stratum in $X_A$ is disconnected, with each connected component contractible. More precisely a connected component of the real points of the stratum $(K,C)$ is just a face in $\F(A)$ which is open in $K$. Thus $X_A(\R)$ has a more refined stratification indexed by pairs $(F,C)$ with $F \in \F(A)$ and $C \in \Ch(\Supp(F))$, so we have recovered  the Salvetti poset of \S\ref{salvetti poset}. Now it is not hard to verify that the closures of strata are contractible and that their intersections are empty or contractible, so that by the Nerve Lemma it follows that $X_A(\R)$ is weakly homotopy equivalent to the order complex of $\Sal(A)$. On the other hand we have the $\R^n$-fiber bundle $M(A_\C) \to X_A(\R)$ of \cref{remark on real points}, so $M(A_\C)$ is homotopic to the order complex of $\Sal(A)$, as originally proven by Salvetti. A more direct way of understanding this connection is that the stratification of $X_A(\R)$ pulls back to a stratification of $M(A_\C)$, and the stratum inside $M(A_\C)$ corresponding to a pair $(F,C) \in \Sal(A)$ is precisely the cartesian product 
$$F \times C.$$
This coincides with the stratification of $M(A_\C)$ described by Bj\"orner--Ziegler in terms of ``sign vectors'' \cite{bjornerziegler}.  \end{rem}

\subsection{Algebraic cell decomposition}

\begin{defn}\label{cell def}An \emph{algebraic cell decomposition} of a scheme $X$ is a filtration of $X$ by open subschemes
$$ \varnothing = U^{(0)} \subset U^{(1)} \subset \ldots \subset U^{(t)} = X$$
where for each $j$ there is an isomorphism $U^{(i)} \setminus U^{(i-1)} \cong \A^{d_i}$. We call $U^{(i)} \setminus U^{(i-1)}$ a \emph{cell} of dimension $d_i$. 
\end{defn}
\begin{para}\label{observations}
We will now show that $X_A$ admits an algebraic cell decomposition. Our strategy will be to make use of the open cover $X_A = \bigcup_{C \in \Ch(A)} X_C$ considered in \S\ref{construction}, where $X_C \cong \A^d$ for all $C \in \Ch(A)$. To be more precise, suppose that $C_1,C_2,\ldots,C_t$ is an enumeration of the elements of $\Ch(A)$ in some order. Define for each $0 \leq i\leq t$ the open subset $U^{(i)} \subseteq X_A$ by the formula
$$ U^{(i)} \coloneqq X_{C_1} \cup X_{C_2} \cup \ldots \cup X_{C_i}.$$
Clearly we have a chain of inclusions
$$ \varnothing = U^{(0)} \subset U^{(1)} \subset \ldots \subset U^{(t)} = X_A, $$
so all that is needed for this to be an algebraic cell decomposition is that $U^{(i)} \setminus U^{(i-1)}$ is isomorphic to an affine space for every index $i$. Now we have
$$ U^{(i)} \setminus U^{(i-1)} = X_{C_i} \setminus \bigcup_{j<i} (X_{C_i} \cap X_{C_j}),$$ and if we also note that
\begin{equation} \label{description of intersections} X_{C_i} \cap X_{C_j} = \A^d \setminus \bigcup_{H \in \Sep(C_i,C_j)} H, \end{equation}
then we see that an enumeration $C_1,C_2,\ldots,C_t$ of the chambers of the arrangement $A$ will give us an algebraic cell decomposition if and only if the following condition is satisfied:
\[ \text{ ($\star$) for any $i=1,\ldots,t$, we have that } \,\,\bigcap_{j<i} \,\, \bigcup_{H \in \Sep(C_i,C_j)} H\,\, \text{ is an affine subspace of $\A^d$.}\]
\end{para}
\begin{defn}\label{valid order}An enumeration $C_1,C_2,\ldots,C_t$ of $\Ch(A)$ is called a \emph{valid order} if the condition ($\star$) is satisfied. \end{defn}
\begin{para}
Perhaps surprisingly, the notion of a valid order is not new; when expressed combinatorially,  it coincides with a notion introduced by Delucchi. One has the following theorem:
\end{para}

\begin{thm}[Delucchi, Lofano--Paolini]Every real hyperplane arrangement admits a valid order. \label{delucchi}\end{thm}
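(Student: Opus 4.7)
The plan is to construct a valid order explicitly via a Bruggesser--Mani-type line shelling adapted to the arrangement. First, I would reformulate $(\star)$ combinatorially: encode each chamber $C$ by its sign vector $\sigma(C) \in \{+,-\}^A$ relative to a fixed base chamber $C_1$, so that $\Sep(C_i, C_j) = \{H \in A : \sigma(C_i)(H) \neq \sigma(C_j)(H)\}$. Since any intersection of hyperplanes of $A$ is a flat, condition $(\star)$ is equivalent to the demand that $O_i \coloneqq \bigcap_{j<i} \bigcup_{H \in \Sep(C_i, C_j)} H$ equal $\bigcap_{H \in N_i} H$ for some set $N_i \subseteq A$. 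The natural candidate is $N_i = \bigcap_{j<i} \Sep(C_i, C_j)$, the set of hyperplanes that separate $C_i$ from \emph{every} previously enumerated chamber.

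To construct the enumeration, I would associate to $A$ its zonotope $Z(A) = \sum_{H \in A}[-v_H, v_H] \subset \R^n$, whose vertices are in bijection with $\Ch(A)$. By the Bruggesser--Mani theorem, a generic line induces a shelling of $\partial Z(A)$ and hence a linear extension of the weak partial order $C \preceq D \iff \Sep(C_1, C) \subseteq \Sep(C_1, D)$ on chambers. I would take the induced ordering $C_1, C_2, \ldots, C_t$ as the candidate enumeration.

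To verify $(\star)$, the containment $\bigcap_{H \in N_i} H \subseteq O_i$ is immediate. The reverse is the heart of the matter: given $x \notin \bigcap_{H \in N_i} H$, one must produce some $j < i$ with $x \notin \bigcup_{H \in \Sep(C_i, C_j)} H$. The natural candidate $C_j$ is the chamber obtained by reflecting $C_i$ across some hyperplane $H_0 \in N_i$ not passing through $x$, and the shelling property must ensure that this reflected chamber precedes $C_i$ in the enumeration.

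The hard part will be precisely this last verification, which requires delicate combinatorial control over the shelling. I would proceed by double induction on the ambient dimension and on the flat lattice $\L(A)$, using that each localized arrangement $A_F$ at a flat $F$ admits its own valid order by the inductive hypothesis, and reducing to a local statement near each flat. The combinatorial details are carried out in Delucchi \cite{delucchi}, where the condition is encoded via a perfect matching on the Salvetti complex, and independently in Lofano--Paolini \cite{lofanopaolini} via discrete Morse theory.
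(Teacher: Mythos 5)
The paper does not actually prove this theorem; it cites Delucchi for the central case and Lofano--Paolini for the general affine case, and then describes Lofano--Paolini's construction explicitly: fix a generic point $x_0$ in the ambient space and enumerate the chambers of $A$ in any weakly increasing order of their euclidean distance from $x_0$. Your outline takes a different route, and several of the differences are genuine gaps rather than stylistic choices.

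The most immediate problem is that the Bruggesser--Mani zonotope construction only makes sense for central (linear) arrangements, whereas the theorem is stated, and needed, for arbitrary affine arrangements; you would at least have to cone the arrangement and explain how to deal with the auxiliary hyperplane at infinity. Second, a line shelling of $Z(A)$ orders the vertices (equivalently, the chambers) by a generic \emph{linear} functional, which is a strictly smaller and genuinely different family of orderings than Lofano--Paolini's euclidean-distance orderings (the latter include orderings around a point $x_0$ deep inside the arrangement, which are not linear). That condition ($\star$) is a real constraint on the family of orderings is exactly the subtle point here --- the paper even remarks that Delucchi's original family of orderings turned out not to be closed under deleting a hyperplane, which broke his inductive argument --- so one cannot substitute one ``reasonable-looking'' family for another and then defer to sources that prove the result for a different family. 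Third, your proposed verification step --- take the chamber ``obtained by reflecting $C_i$ across some $H_0 \in N_i$ not passing through $x$'' --- is not well-posed for general arrangements: if $H_0$ is not a wall of $C_i$, there is no chamber adjacent to $C_i$ across $H_0$, and unless the arrangement is reflection-closed (as for Coxeter arrangements), flipping the $H_0$-coordinate of the sign vector $\sigma(C_i)$ need not produce a sign vector that is realized by any chamber at all. Your combinatorial reformulation of ($\star$) --- that $O_i$ should equal the flat $\bigcap_{H \in N_i} H$ with $N_i = \bigcap_{j<i}\Sep(C_i,C_j)$ --- is the right first move, but the remaining steps would require carrying out Lofano--Paolini's euclidean-ordering argument rather than the line-shelling argument you sketch.
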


\begin{para} \cref{delucchi} was proven by Delucchi in the case of a central hyperplane arrangement \cite[Theorem 4.15]{delucchi}. The general case of \cref{delucchi} is due to Lofano--Paolini \cite[Theorem 5.6]{lofanopaolini}, who also introduced the terminology ``valid order''. Lofano--Paolini's construction of valid orders is particularly simple: let $x_0$ be a generic point of our ambient real vector space, and suppose that the chambers are enumerated in any weakly increasing order by their euclidean distance from the point $x_0$, i.e.\
$$ d(C_i,x_0) < d(C_j,x_0) \implies i<j.$$
Then this is always a valid order. We remark that there is a small gap in Delucchi's argument for \cref{delucchi}: he uses an induction on the number of hyperplanes in the arrangement, and that a total order on $\Ch(A)$ induces a total order on $\Ch(A \setminus \{H\})$, for any $H \in A$. Unfortunately the family of total orders considered by Delucchi is not closed under the operation of removing a hyperplane, which breaks the inductive strategy. Delucchi has communicated to us that if one runs the same argument using e.g.\ the euclidean orderings of Lofano--Paolini (which are closed under removing a hyperplane) then the argument works. 
\end{para}

\begin{para}
Combining \cref{delucchi} with the observations of \S\ref{observations}, we immediately deduce the following:
\end{para}
\begin{cor}For any real hyperplane arrangement $A$, the associated nonseparated scheme $X_A$ admits an algebraic cell decomposition. \label{exists cell decomposition}
\end{cor}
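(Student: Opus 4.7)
The plan is to apply \cref{delucchi} (Delucchi, Lofano--Paolini) directly to the filtration constructed in \S\ref{observations}, and check that the result meets \cref{cell def}. So I will fix a valid order $C_1, C_2, \ldots, C_t$ on $\Ch(A)$, set $U^{(i)} \coloneqq X_{C_1} \cup \cdots \cup X_{C_i}$, and verify that each successive difference $U^{(i)} \setminus U^{(i-1)}$ is isomorphic to an affine space.

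The key step is to identify this difference set-theoretically as a subscheme of $X_{C_i} \cong \A^n$. Since $U^{(i)} = U^{(i-1)} \cup X_{C_i}$, we have $U^{(i)} \setminus U^{(i-1)} = X_{C_i} \setminus (X_{C_i} \cap U^{(i-1)})$, and $X_{C_i} \cap U^{(i-1)} = \bigcup_{j<i}(X_{C_i} \cap X_{C_j})$. Using the description \eqref{description of intersections} of $X_{C_i} \cap X_{C_j}$ as the complement in $X_{C_i}$ of $\bigcup_{H \in \Sep(C_i,C_j)} H$, a De Morgan computation then gives
\[ U^{(i)} \setminus U^{(i-1)} \;=\; \bigcap_{j<i}\,\bigcup_{H \in \Sep(C_i,C_j)} H, \]
viewed as a closed subscheme of $X_{C_i} = \A^n$.

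Next I would invoke \cref{delucchi}: the fact that $C_1, \ldots, C_t$ is a valid order is exactly the condition $(\star)$ that this intersection is an affine subspace of $\A^n$. Each stratum $U^{(i)} \setminus U^{(i-1)}$ is therefore isomorphic to some $\A^{d_i}$, which is precisely what \cref{cell def} requires. For $i=1$ the empty intersection equals all of $X_{C_1} = \A^n$, so the filtration starts correctly, and by construction $U^{(t)} = X_A$.

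The argument carries essentially no obstacle once \cref{delucchi} is granted, because the hard combinatorial content --- the existence of a valid ordering of the chambers of a real hyperplane arrangement --- has already been imported from the work of Delucchi and Lofano--Paolini. What remains is the elementary set-theoretic unpacking of the gluing data from \S\ref{construction}, which shows that the validity of an order is exactly the condition needed for the associated filtration of $X_A$ to be an algebraic cell decomposition.
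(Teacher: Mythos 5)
Your proposal is correct and follows exactly the paper's argument: define $U^{(i)} = X_{C_1}\cup\cdots\cup X_{C_i}$, identify $U^{(i)}\setminus U^{(i-1)}$ with $\bigcap_{j<i}\bigcup_{H\in\Sep(C_i,C_j)}H$ inside $X_{C_i}\cong\A^n$, and observe that condition $(\star)$ (the definition of a valid order) is exactly what makes this an affine subspace, so \cref{delucchi} gives the decomposition. This is precisely the content of \S\ref{observations} combined with \cref{delucchi}, which is how the paper derives the corollary.
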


\begin{rem}\label{refinement}
The cell decomposition of $X_A$ obtained from \cref{exists cell decomposition} is a \emph{refinement} of the  stratification of $X_A$, in the sense that each affine cell is a union of strata. In this way we can think of the algebraic cell decomposition as an increasing filtration of the poset $\mathrm{Strat}(A)$, such that the union of strata that are in the $k$th step of the filtration but not the $(k-1)$st form a cell. 
\end{rem}

\begin{rem}At first sight it is perhaps surprising that the question whether there always exists a total order on $\Ch(A)$ satisfying the strange condition ($\star$) has already been considered (and answered) in the literature. But in fact Delucchi's motivation for introducing the condition ($\star$) was not unrelated to ours. Recall that a CW complex is said to be \emph{minimal} if its $k$th Betti number coincides with the number of $k$-cells. By a theorem due independently to Randell \cite{randell} and Dimca--Papadima \cite{dimcapapadima}, the complement of any complex hyperplane arrangement has the homotopy type of a minimal CW complex, see also \cite{adiprasito}. On the other hand as we have already mentioned the complement of a complexified \emph{real} arrangement of hyperplanes deformation retracts onto an the \emph{Salvetti complex}, and it is natural to try to make minimality explicit in this case by writing down an explicit perfect discrete Morse function on the Salvetti complex. Delucchi \cite{delucchi} showed that a valid order on $\Ch(A)$ induces a perfect discrete Morse function on $\Sal(A)$, and this was his motivation for introducing the notion of a valid order. What we have explained above is precisely that a valid order on $\Ch(A)$ induces a filtration of the poset $\mathrm{Strat}(A)$ --- and hence also of $\Sal(A)$ --- such that the union of strata within each successive difference is homeomorphic to an open cell. Delucchi showed instead that the poset of strata within each successive difference admits an acyclic matching with a single critical cell, so that these can be ``glued together'' to a discrete Morse function on the whole poset $\Sal(A)$. For another approach to minimality via discrete Morse theory on the Salvetti complex see  \cite{salvettisettepanella}. 

A related fact is that the filtration that we obtain is ``minimal'' in the sense that the cells form a basis for the Borel--Moore homology of the complement of the arrangement. See \cite{itoyoshinaga} for another quite similar construction of such stratifications of complements of complexified real hyperplane arrangements. In fact it is not hard to show that more is true: the cells make up a CW decomposition of the one-point compactification of the complement, in which the attaching maps are nullhomotopic by a straight-line homotopy, so that the one-point compactification is a wedge of spheres in a very explicit way. This works for both $M(A_{(3)})$ and $M(A_\C)$. 
  \end{rem}

\begin{rem}
One can think of the notion of a valid ordering as an analogue of the notion of a shelling of a simplicial complex. Let us explain this. We consider all posets to have the Alexandroff topology, in which the open subsets are the downwards closed subsets. Then the poset $\Strat(A)$ has an evident open cover indexed by $\Ch(A)$, with each open subset isomorphic to $\L(A)$: explicitly, to $C \in \Ch(A)$ we associate the subposet
$$ S_C = \{ (K,D) \in \Strat(A) : C \subseteq D\} \cong \L(A). $$
The open cover $\Strat(A) = \bigcup_{C \in \Ch(A)} S_C$ corresponds geometrically to the open cover of the scheme $X_A$ by the subschemes $X_C$ of \S\ref{construction}. In these terms we can define a valid order as an enumeration of these open sets, say $S_1,\ldots, S_t$, such that the poset 
$$ S_i \setminus \bigcup_{j<i} S_j$$
has a unique minimum element for all $i$. If instead $F$ is the face poset of a finite purely $d$-dimensional simplicial complex, then $F$ has an open cover by subposets each isomorphic to the boolean poset of rank $d$ (corresponding to the maximal dimensional faces), and a shelling is an enumeration of the sets in this open cover satisfying precisely the same condition. However, we will not pursue this analogy further here. 
\end{rem}

\subsection{Cohomology of non-separated schemes via algebraic cell decompositions}

\begin{para}In this section we explain why the cohomology of a smooth scheme with an algebraic cell decomposition (\cref{cell def}) is torsion free, and why one can read off its Betti numbers from the number of cells in each dimension. This result is certainly well known, but we do not know a reference stating it in the generality we need it, i.e.\ for the \'etale cohomology of potentially nonseparated schemes. But let us first recall the (easy) argument in the case of a complex algebraic variety. \end{para}

\begin{para}The arguments are naturally phrased in terms of the \emph{Borel--Moore homology} groups $\myol H_i(X,\Z)$, which can be defined as the reduced homology groups of the one-point compactification of $X$. If $X$ is smooth then Poincar\'e--Lefschetz duality gives an isomorphism $\myol H_i(X,\Z) \cong H^{2d-i}(X,\Z)$ where $d=\dim_\C(X)$.\end{para}

\begin{prop}\label{cellprop1}Let $X$ be a complex algebraic variety with an algebraic cell decomposition. Then the odd degree Borel--Moore homology groups of $X$ vanish, and for all $i$ there is an isomorphism $\myol H_{2i}(X,\Z) \cong \Z^{ \oplus n_i}$, where $n_i$ denotes the number of cells of dimension $i$. 	In particular, if $X$ is in addition smooth and of pure dimension $d$ then we also have that the odd cohomology groups of $X$ vanish, and $H^{2i}(X,\Z) \cong \Z^{\oplus n_{d-i}}$, by Poincar\'e duality.\end{prop}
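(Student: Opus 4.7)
The plan is to argue by induction on the length $t$ of the cell decomposition, using the long exact sequence in Borel--Moore homology associated to the complementary pair $U^{(i-1)} \subset U^{(i)} \supset Z_i$, where $Z_i = U^{(i)} \setminus U^{(i-1)} \cong \A^{d_i}$ is closed in $U^{(i)}$. The two key input facts are: (a) for any decomposition of a complex analytic space (even non-Hausdorff) into a closed subspace with open complement, one has a long exact sequence
\[
\cdots \to \myol H_j(Z_i,\Z) \to \myol H_j(U^{(i)},\Z) \to \myol H_j(U^{(i-1)},\Z) \to \myol H_{j-1}(Z_i,\Z) \to \cdots
\]
coming from the distinguished triangle $i_\ast\omega_{Z_i} \to \omega_{U^{(i)}} \to Rj_\ast\omega_{U^{(i-1)}}$ obtained by applying Verdier duality to the excision triangle; and (b) the Borel--Moore homology of $\A^d$ is $\Z$ in degree $2d$ and vanishes in all other degrees.

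The base case $U^{(0)} = \varnothing$ is trivial. For the inductive step, assume that $\myol H_\ast(U^{(i-1)},\Z)$ is concentrated in even degrees, is free abelian, and has the predicted ranks. Since $\myol H_{\mathrm{odd}}(Z_i,\Z)$ and (by induction) $\myol H_{\mathrm{odd}}(U^{(i-1)},\Z)$ both vanish, the long exact sequence immediately gives $\myol H_{2k+1}(U^{(i)},\Z) = 0$ for all $k$ and reduces to short exact sequences
\[
0 \to \myol H_{2k}(Z_i,\Z) \to \myol H_{2k}(U^{(i)},\Z) \to \myol H_{2k}(U^{(i-1)},\Z) \to 0
\]
in each even degree. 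The quotient is free abelian by the inductive hypothesis, so each sequence splits; hence $\myol H_{2k}(U^{(i)},\Z)$ is free abelian of rank $\delta_{k,d_i}$ plus the rank of $\myol H_{2k}(U^{(i-1)},\Z)$, which is exactly the number of $k$-dimensional cells contained in $U^{(i)}$. This closes the induction.

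For the final assertion, if $X$ is smooth and purely $d$-dimensional then Poincar\'e--Lefschetz duality gives an isomorphism $H^{2d-j}(X,\Z) \cong \myol H_j(X,\Z)$, from which the statement on ordinary cohomology follows immediately.

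The only genuine subtlety is that the scheme $X$ of interest (namely $X_A$) is typically non-separated, so one must be careful about which definition of Borel--Moore homology is used. I would adopt the sheaf-theoretic definition $\myol H_\ast(X,\Z) \coloneqq \mathbf{H}^{-\ast}(X,\omega_X)$ via the topological dualizing complex; this agrees with the compactification-based definition when $X$ is Hausdorff, and the excision triangle and Poincar\'e--Lefschetz duality both hold verbatim in the non-separated case, so the argument goes through unchanged.
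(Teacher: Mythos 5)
Your inductive argument via the long exact sequence in Borel--Moore homology is exactly the paper's proof, with the splitting of the resulting short exact sequences made a bit more explicit. The one point to push back on is your final paragraph: \cref{cellprop1} concerns a \emph{complex algebraic variety}, hence a Hausdorff space, so the compactification-based definition of Borel--Moore homology is the one in play and no subtlety arises. Your claim that the topological dualizing-complex formalism and Poincar\'e--Lefschetz duality carry over ``verbatim'' to non-separated complex schemes is precisely what the authors are careful \emph{not} to assert --- they note that Verdier duality and Borel--Moore homology for non-Hausdorff spaces are genuinely delicate, and the non-separated case actually needed for $X_A$ is handled separately in \cref{cellprop} by passing to \'etale cohomology and the $\ell$-adic six-functor formalism of Laszlo--Olsson, which is constructed without separatedness hypotheses.
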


\begin{proof}
	This is proven by induction on the number of cells, the base case being that $X = \varnothing$. For the induction step we consider the filtration
	$$ \varnothing = U^{(0)} \subset U^{(1)} \subset \ldots \subset U^{(t)} = X$$
of \cref{cell def}, with $U^{(t)} \setminus U^{(t-1)} \cong \A^d$. Then we know the result for $U^{(t-1)}$ by induction. To conclude the result for $X=U^{(t)}$ we use the long exact sequence in Borel--Moore homology
	\[   \ldots \to \myol H_i(U^{(t-1)},\Z) \to \myol H_i(X,\Z_\ell) \to \myol H_i(\C^d,\Z) \to   \myol H_{i-1}(U^{(t-1)},\Z) \to \ldots  \]
	
	and the fact that $\myol H_i(\C^{d},\Z) = 0$ for $i \neq 2d$, $\myol H_{2d}(\C^{d},\Z) \cong \Z$. 
\end{proof}

\begin{rem}
	A common variant of \cref{cellprop1} in the literature is to assume $X$ proper but not necessarily smooth. For proper varieties Borel--Moore homology coincides with ordinary homology, and under the hypotheses of \cref{cellprop1} one has  $H^{2i}(X,\Z) \cong \Z^{\oplus n_{i}}$. If $X$ is neither proper nor smooth there is no reason that one should be able to determine its cohomology groups from an algebraic cell decomposition. 
\end{rem}

\begin{para}
In generalizing \cref{cellprop1} to the non-Hausdorff setting one runs into the problem that Borel--Moore homology (or its dual, cohomology with compact support), and the Verdier duality theory, are typically only defined for Hausdorff spaces. (But see \cite{moerdijk-mrcun} for a basic theory of compactly supported sheaf cohomology for non-Hausdorff manifolds.) In this respect the situation in \'etale cohomology is in fact significantly nicer. Although textbook treatments of \'etale cohomology only define the functors $Rf_!, f^!$ for separated morphisms, this can now be completely sidestepped using the work of Laszlo--Olsson \cite{laszlo-olsson1,laszlo-olsson2}. Their main focus is to construct an $\ell$-adic formalism of ``six functors'' for stacks, but even for schemes their lack of separation hypotheses is new. Their approach to the functors $Rf_!$ and $f^!$ is to first construct the Verdier duality functor $\mathbb D$ and then define 
$$ Rf_! = \mathbb D \circ Rf_\ast \circ \mathbb D, \qquad f^! = \mathbb D \circ f^\ast \circ \mathbb D, $$
and this, crucially, does not require $f$ to be separated. 
\end{para}
\begin{para}Let us recall how to define Borel--Moore homology in terms of Grothendieck's six functors. For $f \colon X \to \mathrm{Spec}(k)$ a finite type scheme, say over a separably closed field, its \'etale Borel--Moore homology is defined as $\myol H_i^{\text{\emph{\'et}}}(X,\Z_\ell) = \mathcal H^{-i}(Rf_\ast f^! \Z_\ell)$; compare this with the usual \'etale cohomology $H^i_{\text{\emph{\'et}}}(X,\Z_\ell) = \mathcal H^{i}(Rf_\ast f^\ast \Z_\ell)$. This theory has the following properties: 
\end{para}
\begin{enumerate}
	\item If $X$ is smooth and of pure dimension $d$ then there is a Poincar\'e duality isomorphism
	$$ H^i_{\text{\emph{\'et}}}(X,\Z_\ell) \otimes \Z_\ell(d) \cong \myol H_{2d-i}^{\text{\emph{\'et}}}(X,\Z_\ell) .$$
	\item If $U \subset X$ is open with closed complement $Z$, then there is a long exact sequence
	$$  \ldots \to \myol H_i^{\text{\emph{\'et}}}(Z,\Z_\ell) \to \myol H_i^{\text{\emph{\'et}}}(X,\Z_\ell) \to \myol H_i^{\text{\emph{\'et}}}(U,\Z_\ell) \to   \myol H_{i-1}^{\text{\emph{\'et}}}(Z,\Z_\ell) \to \ldots $$
\end{enumerate}
The first property is immediate from our definition of Borel--Moore homology and the fact that $f^!(-) \cong f^\ast(-) \otimes \Z_\ell(d)[2d]$ if $f$ is smooth of relative dimension $d$ \cite[Lemma 9.1.2]{laszlo-olsson2}. The second property can be proven by considering the distinguished triangle
$$  i_\ast i^! \mathcal F \to \mathcal F \to Rj_\ast j^! \mathcal F \stackrel{+1}{\longrightarrow}$$
where $j \colon U \to X$ and $i \colon Z \to X$ are the inclusions, setting $\mathcal F = f^! \Z_\ell$ where $f \colon X \to \mathrm{Spec}(k)$ is the structural morphism, and applying $Rf_\ast$. The distinguished triangle is constructed in case of finite coefficients in \cite{laszlo-olsson1} as Equation (4.10.ii) and the same reasoning works in the $\ell$-adic case. 
\end{para}
\begin{prop}\label{cellprop}
Let $X$ be a finite type scheme over a base field $k$ equipped with an algebraic cell decomposition. Then the odd degree Borel--Moore homology groups of $X$ vanish, and for all $i$ there is an isomorphism $\myol H_{2i}^{\text{{\'et}}}(X,\Z_\ell) \cong \Z_\ell(i)^{ \oplus n_i}$, where $n_i$ denotes the number of cells of dimension $i$. 	In particular, if $X$ is in addition smooth and of pure dimension $d$ then we also have that the odd cohomology groups of $X$ vanish, and $H^{2i}_{\text{{\'et}}}(X,Z_\ell) \cong \Z_\ell(-i)^{\oplus n_{d-i}}$, by Poincar\'e duality.
\end{prop}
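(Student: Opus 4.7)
The plan is to follow the induction-on-cells strategy of Proposition~\ref{cellprop1} verbatim, replacing the Betti six-functor formalism by the $\ell$-adic one of Laszlo--Olsson invoked in the excerpt, which supplies the two properties (Poincar\'e duality and the open--closed long exact sequence in Borel--Moore homology) without any separation hypothesis. Fix an algebraic cell decomposition $\varnothing = U^{(0)} \subset U^{(1)} \subset \ldots \subset U^{(t)} = X$, and induct on $t$. The base case $X = \varnothing$ is trivial. In the inductive step, let $Z \coloneqq U^{(t)}\setminus U^{(t-1)} \cong \A^{d_t}$, which is closed in $X$ with open complement $U^{(t-1)}$.

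The first computation is to pin down $\myol H^{\text{\emph{\'et}}}_i(\A^{d_t}, \Z_\ell)$. Smoothness of $\A^{d_t}$ and property~(1) give $\myol H_i^{\text{\emph{\'et}}}(\A^{d_t}, \Z_\ell) \cong H^{2d_t - i}_{\text{\emph{\'et}}}(\A^{d_t}, \Z_\ell)(d_t)$, and since the higher \'etale cohomology of affine space vanishes this is $\Z_\ell(d_t)$ concentrated in degree $2d_t$. Now feed this into the long exact sequence of property~(2) applied to the pair $(U^{(t-1)}, Z)$ inside $X$. For indices $i \notin \{2d_t, 2d_t+1\}$ the sequence degenerates to an isomorphism $\myol H^{\text{\emph{\'et}}}_i(X, \Z_\ell) \cong \myol H^{\text{\emph{\'et}}}_i(U^{(t-1)}, \Z_\ell)$, which is of the required form by the inductive hypothesis applied to $U^{(t-1)}$.

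The only remaining piece of the long exact sequence is
$$0 \to \myol H^{\text{\emph{\'et}}}_{2d_t+1}(X, \Z_\ell) \to \myol H^{\text{\emph{\'et}}}_{2d_t+1}(U^{(t-1)}, \Z_\ell) \to \Z_\ell(d_t) \to \myol H^{\text{\emph{\'et}}}_{2d_t}(X, \Z_\ell) \to \myol H^{\text{\emph{\'et}}}_{2d_t}(U^{(t-1)}, \Z_\ell) \to 0.$$
By induction the second term vanishes, so $\myol H^{\text{\emph{\'et}}}_{2d_t+1}(X, \Z_\ell) = 0$ and we are left with a short exact sequence $0 \to \Z_\ell(d_t) \to \myol H^{\text{\emph{\'et}}}_{2d_t}(X, \Z_\ell) \to \myol H^{\text{\emph{\'et}}}_{2d_t}(U^{(t-1)}, \Z_\ell) \to 0$. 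The right-hand term is a free $\Z_\ell$-module of the predicted form by induction, so the sequence splits, and adjoining the new cell increases the rank of the degree $2d_t$ group by one, exactly matching the increment in $n_{d_t}$. The ``in particular'' clause then follows by applying Poincar\'e duality (property~(1)) termwise.

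The only thing that could plausibly go wrong is the availability of the six-functor formalism in the non-separated setting, since both property~(1) and property~(2) are standard only for separated morphisms. That, however, is exactly what Laszlo--Olsson provides, as already emphasized in the paragraph preceding the statement: their definition $Rf_! \coloneqq \mathbb D \circ Rf_\ast \circ \mathbb D$ makes sense regardless of separation, the relevant smooth base change/purity isomorphism $f^!\Z_\ell \cong f^\ast\Z_\ell(d)[2d]$ is established in \cite[Lemma~9.1.2]{laszlo-olsson2}, and the localization triangle yielding property~(2) works for any open immersion. So the induction really does go through without any additional hypothesis on $X$, which is the whole point of invoking this formalism for a potentially wildly non-Hausdorff scheme like $X_A$.
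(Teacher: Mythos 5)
Your proof is correct and follows the same strategy as the paper, which for \cref{cellprop} simply says the argument is identical to that of \cref{cellprop1}: induct on the number of cells, use the purity/Poincar\'e duality isomorphism from the Laszlo--Olsson formalism to compute $\myol H^{\text{\emph{\'et}}}_\ast(\A^{d_t},\Z_\ell)$, and then run the open--closed long exact sequence in Borel--Moore homology. As a very small aside, your version of the localization sequence has the open and closed pieces in the correct positions (closed stratum mapping in, open complement mapping out), which actually matches the paper's stated property~(2) more faithfully than the display in the proof of \cref{cellprop1} does.
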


\begin{proof}
This can now be proven in an identical manner as \cref{cellprop1}. 
\end{proof}

\begin{rem}A fortiori we also see that the analogue of \cref{cellprop1} remains valid for the singular cohomology of non-separated complex schemes, by applying the $\ell$-adic \cref{cellprop} and Artin's comparison isomorphism.  \end{rem}

\bibliographystyle{alpha}
\bibliography{database}

\end{document}